\tiny\color{gray},
\theoremstyle{plain}
\newtheorem{thm}{Theorem}[section]
\newtheorem{prop}[thm]{Proposition}
\theoremstyle{definition}
\newtheorem{defn}{Definition}[section]
\theoremstyle{remark}
\newtheorem{rem}{Remark}[section]
\numberwithin{equation}{subsection}
\newcommand{\norm}[1]{\left\Vert#1\right\Vert}
\newcommand{\abs}[1]{\left\vert#1\right\vert}
\newcommand{\set}[1]{\left\{#1\right\}}
\newcommand{\qnt}[1]{\left(#1\right)}
\newcommand{\me}{\mathrm{e}}
\newcommand{\dif}{\mathrm{d}}
\DeclareSymbolFont{lettersA}{U}{pxmia}{m}{it}
\DeclareMathSymbol{\piup}{\mathord}{lettersA}{"19}
\newcommand{\Rmnum}[1]{\expandafter\@slowromancap\romannumeral#1@}
\newcommand{\G}{\mathcal{G}}
\begin{document}
\title[Approximate transonic shock solution for hypersonic flow past a large curved convex wedge]
{Approximate transonic shock solution for hypersonic flow past a large curved convex wedge}
\author{Dian Hu}

\address{School of Mathematics, East China University Of Science And Technology,
Shanghai, 200237, China.}
\email{\tt hudianaug@qq.com}



\keywords{transonic shock; hypersonic flow; piecewise smooth potential flow; global attached shock; approximate solution}
\subjclass[2000]{35A01, 35J25, 35J62, 35M10, 35Q31, 35R35, 76J20, 76L05}
\date{\today}

\begin{abstract}
In this paper, we study the existence of a global transonic shock generated by hypersonic potential flow over a large curved convex wedge. Modeling 2-dimensional steady potential flow leads to a free boundary value problem of quasilinear equation. By applying the hodograph transformation, in which one of the coordinate is the unknown function, this problem is reduced to a free boundary value problem of linear equation. For hypersonic incoming flow with adiabatic index $\gamma=2$, we construct an approximate boundary based on the asymptotic state. Then, within the region defined by this approximate boundary, we solve a narrow-region elliptic boundary value problem. This solution serves as the approximate solution to the free boundary problem that we seek. Utilizing uniform weighted Schauder estimates for the elliptic mixed boundary value problem in the narrow region, we obtain an error estimate for the free boundary.
\end{abstract}

\maketitle
\tableofcontents

\section{Introduction}\label{sect:Introduction}
We consider the potential flow
\begin{equation}\label{eq:PotentialEquations}
\begin{cases}
(\rho u)_x + (\rho v)_y = 0,\\
v_x-u_y=0,
\end{cases}
\end{equation}
where the first line is the conservation law of mass and the second is the irrotational condition. Here $\rho$ and $(u, v)$ denote respectively the density of the mass and the velocity of the flow. For polytropic gas, the pressure of the flow is
\begin{equation*}
p(\rho)=A\rho^\gamma,
\end{equation*}
where $A>0$ is a constant and $\gamma\in(1, 3)$ the adiabatic index of the gas. In this paper, we assume $A=1$. The sound speed $c(\rho)$ is defined by
\begin{equation*}
c^2(\rho)=\frac{\dif p}{\dif \rho}=\gamma\rho^{\gamma-1}.
\end{equation*}
The density $\rho$ and the velocity $(u, v)$ are  connected by the Bernoulli’s law
\begin{equation}\label{eq:BernoulliLaw}
\frac{u^2+v^2}{2}+\frac{c^2}{\gamma-1}=\frac{u^2+v^2}{2}+\frac{\gamma\rho^{\gamma-1}}{\gamma-1}=\frac{\bar{q}^2}{2},
\end{equation}
where $\bar{q}$ is the limit speed. The Mach number of the flow is defined by
\begin{equation}\label{eq:MachNumber}
M=\frac{q}{c}
\end{equation}
with $q=\sqrt{u^2+v^2}$.

For the smooth flow fields, we can rewrite \eqref{eq:PotentialEquations} in following non divergent form
\begin{equation}\label{eq:EulerEquationsNC}
\begin{cases}
(c^2-u^2)u_x - uv(u_y+v_x)+(c^2-v^2)v_y=0,\\
u_y-v_x=0.
\end{cases}
\end{equation}
It can be written to the matrix form
\begin{equation}\label{eq:EulerEquationsM}
\begin{pmatrix}
u\\
v
\end{pmatrix}_x 
+ 
\begin{pmatrix}
\frac{-2uv}{c^2-u^2}& \frac{c^2-v^2}{c^2-u^2}\\
-1& 0
\end{pmatrix}
\begin{pmatrix}
u\\
v
\end{pmatrix}_y=0.
\end{equation}
For this quasilinear system, one can calculate that it is hyperbolic for $u^2+v^2>c^2$ and elliptic for $u^2+v^2<c^2$. The flow field is called supersonic flow and subsonic flow respectively.

On the shock $\mathsf{S}: x=\phi(y)$, we have the following Rankine-Hugoniot jump condition (which we abbreviate as R-H conditions
hereafter)
\begin{equation}\label{eq:RHCondtionb}
\begin{cases}
\phi'=\frac{[\rho u]}{[\rho v]};\\
\phi'=-\frac{[v]}{[u]},
\end{cases}
\end{equation}
where $[F]$ in this paper is defined by
\begin{equation*}
[F](\phi(y), y)=\lim\limits_{x\rightarrow\phi(y)+0}F(x, y)-\lim\limits_{x\rightarrow\phi(y)-0}F(x, y).
\end{equation*}

By symmetry, the flow needs only to be considered in the upper half-plane. A convex wedge $\mathsf{W}$ in this plane has boundary
\begin{equation}\label{eq:WallPositionCondition}
	x = b(y).
\end{equation}
It follows the slip wall condition
\begin{equation}\label{eq:WallCondition}
u-vb'(y)=0.
\end{equation}
In this paper, we assume that $b\in C^\infty$ satisfies
\begin{equation}\label{eq:WallSlopeRange}
b':[0, +\infty)\mapsto[\flat, \sharp),
\end{equation}
\begin{equation}\label{eq:SupersonicWallCondition}
0<\flat\leq b'<\sharp<\sqrt{\frac{\gamma-1}{2}},
\end{equation}
and
\begin{equation}\label{eq:WallCondition2nd}
b''(y)>0.
\end{equation}

For an incoming supersonic flow with velocity $(\underline{u}, 0)$ and presented wall $\mathsf{W}$, we intend to determine the shock position $\mathsf{S}$ and solve out $(u, v)$ between $\mathsf{S}$ and $\mathsf{W}$ with the shock free boundary value condition \eqref{eq:RHCondtionb} and wall condition \eqref{eq:WallCondition}. Especially, the entropy condition $\underline{u}>q$ holds along the shock. In the following, this problem is abbreviated as $\mathbf{(P)}$.

Eliminating $\phi'$ in \eqref{eq:RHCondtionb}, we can deduce an equation along $\mathsf{S}$ as follows
\begin{equation}\label{eq:shockpolar}
[\rho u][u]+[\rho v][v]=(\rho u-\underline{\rho} \underline{u})(u-\underline{u})+\rho v^2=0.
\end{equation}
For any incoming supersonic flow with velocity $(\underline{u}, 0)$ and density $\underline{\rho}$, the downstream states $(u, v)$ satisfying the governing equations along $\mathsf{S}$ form a curve in the $(u, v)$-plane, known as the shock polar.

When $\mathsf{W}$ is a straight wedge ($x = b_0y$, $b_0 > 0$), the flow in the region between shock $\mathsf{S}$ and wall $\mathsf{W}$ may remain constant. The problem $\mathbf{(P)}$ can be analysed via the shock polar \eqref{eq:shockpolar}. As illustrated in Figure \ref{fig:ShockPolar}, for the given velocity of the incoming flow $(\underline{u}, 0)$, the velocity $(u, v)$ of the down stream must locate on the shock polar branch $u<\underline{u}$. By the wall condition \eqref{eq:WallCondition}, the down stream states $(u, v)$ is the intersection of the shock polar and the line $u=b_0v$. Meanwhile, the slope of the shock can be determined by the second equation of \eqref{eq:RHCondtionb}. Geometrically, the direction of the shock wave is perpendicular to the line connecting $(u, v)$ and $(\underline{u}, 0)$. Thus, for the straight wall case, a piecewise constant solution can be constructed. Based on the state of the flow field behind the shock, the waves can be classified into two categories. One is the supersonic shock, where the flow field behind the shock is supersonic. The other is the transonic shock, where the flow field behind the shock is subsonic. For curved wall boundaries, nonlinear perturbation stability is analysed in \cite{ChenFang2007JDE,YinZhou2009JDE}, proving the mathematical stability of the piecewise constant wave structure. However, highly curved walls $\mathsf{W}$ introduce significant complexity. A particularly challenging case is the blunt body ($b'(0) = 0$), where a detached transonic shock forms upstream. As this shock adjusts to the obstacle's curvature, its strength diminishes and it transitions to a supersonic shock (Figure \ref{fig:bluntbody}). This problem presents multifaceted complexity: the post-shock flow field (to be determined) contains both supersonic and subsonic regions with an a priori unknown sonic transition/degeneration line; the shock structure is globally determined by the coupled mixed-type flow field behind it; substantial flow deflection occurs across the shock; detachment manifests; among other complex features.
\begin{figure}[htbp]
	\setlength{\unitlength}{1bp}
	\begin{center}
		\begin{picture}(190,190)
		\put(-60,-10){\includegraphics[scale=0.4]{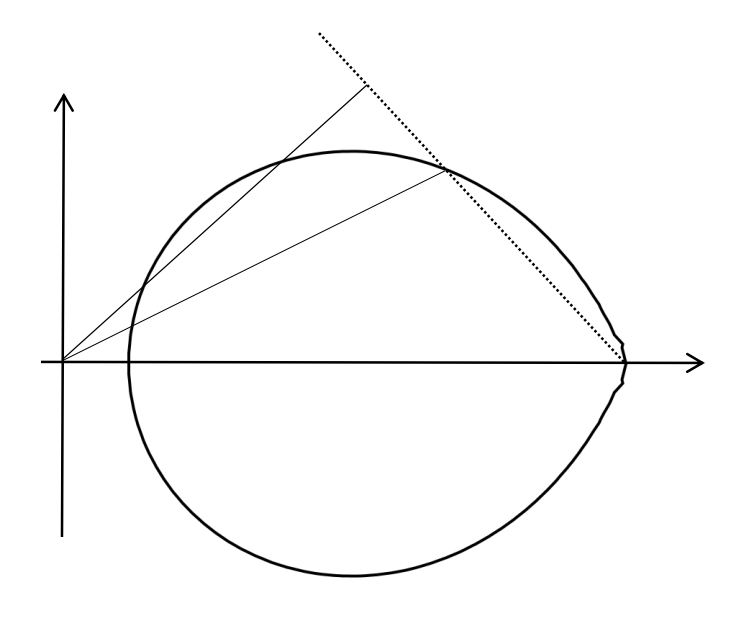}}
		\put(-52,145){\small$v$}
		\put(143,77){\small$u$}
		\put(128,63){\tiny$(\underline{u}, 0)$}
		\put(76,131){\tiny$(u, v)$}
		\put(53,155){\small$\mathsf{S}$}
		\put(-55,60){\small$O$}
		\end{picture}
	\end{center}
	\caption{shock polar for $\underline{u}<\bar{q}$}\label{fig:ShockPolar}
\end{figure}
\begin{figure}[htbp]
	\setlength{\unitlength}{1bp}
	\begin{center}
		\begin{picture}(190,190)
		\put(-60,-10){\includegraphics[scale=0.4]{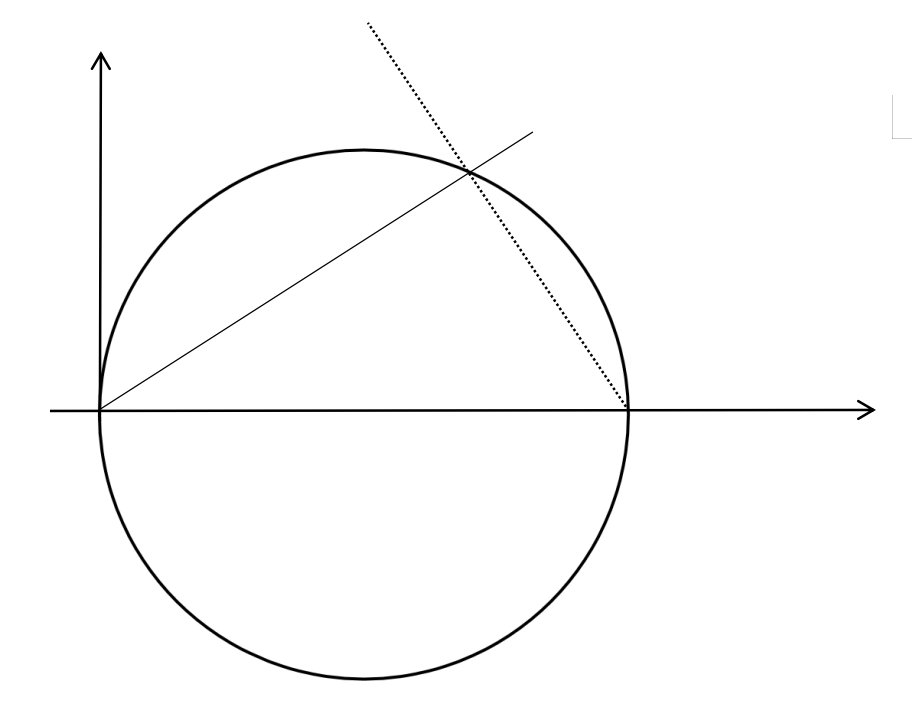}}
		\put(-43,180){\small$v$}
		\put(193,87){\small$u$}
		\put(129,71){\tiny$(\bar{q}, 0)$}
		\put(86,150){\tiny$(u, v)$}
		\put(103,165){\small$\mathsf{S}$}
		\put(-45,70){\small$O$}
		\end{picture}
	\end{center}
	\caption{shock polar for $\underline{u}=\bar{q}$}\label{fig:ShockPolar0}
\end{figure}

Particularly, we consider a limit case for $\mathbf{(P)}$. If the incoming flow has limit speed $\underline{u}=\bar{q}$, then, by \eqref{eq:BernoulliLaw}, the corresponding density is $\underline{\rho}=0$. By the entropy condition, the down stream density $\rho>0$ and the shock polar \eqref{eq:shockpolar} can be reduced to the circle
\begin{equation}\label{eq:ShockpolarS}
u^2+v^2-\bar{q}u=0.
\end{equation}
As a result, R-H condition \eqref{eq:RHCondtionb} becomes
\begin{equation*}
\phi'=\frac{u}{v}.
\end{equation*}
In this way, the direction of the shock front is tangent to the downstream velocity, and no mass pass through the shock wave. Thus we can construct a distribution solution by setting $\phi=b$ and the downstream domain degenerates into a curve $\mathsf{S}=\mathsf{W}$. The downstream velocity is defined along $\mathsf{W}$ by combining \eqref{eq:shockpolar} and \eqref{eq:WallCondition} for $v>0$. This limit solution satisfies the free boundary value problem $\mathbf{(P)}$ in weak sense and can be introduced for many obstacle with large curved boundary. Employing the continuity method, we attempt to construct a solution where $\mathsf{S}$ and $\mathsf{W}$ are separated, for $0 < \bar{q} - \underline{u} << 1$, as a small perturbation of this limit solution (see Figures \ref{fig:curvedwedges}-\ref{fig:bluntbody}). For the supersonic shock case, where $b'>\sqrt{\frac{\gamma-1}{2}}$, this problem has been discussed in \cite{Hu2018JMAA,HuQuJLMS2025,HuZhang2019SIAM}.
\begin{figure}[htbp]
	\setlength{\unitlength}{1bp}
	\begin{center}
		\begin{picture}(190,100)
		\put(-60,-10){\includegraphics[scale=0.4]{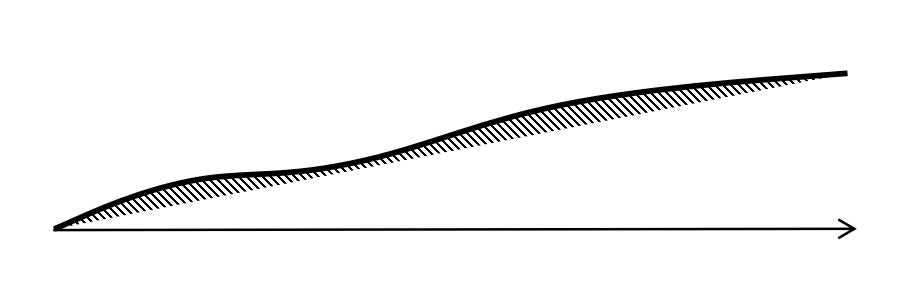}}
		\put(128,69){\small$\mathsf{S}=\mathsf{W}$}
		\put(-48,65){\small$\rightarrow$}
		\put(-48,79){\tiny$\underline{u}=\bar{q}$}
		\put(185,18){\small$x$}
		\end{picture}
	\end{center}
	\caption{limit solution for $\underline{u}=\bar{q}$}\label{fig:curvedwedges}
\end{figure}
\begin{figure}[htbp]
	\setlength{\unitlength}{1bp}
	\begin{center}
		\begin{picture}(190,100)
		\put(-60,-10){\includegraphics[scale=0.3]{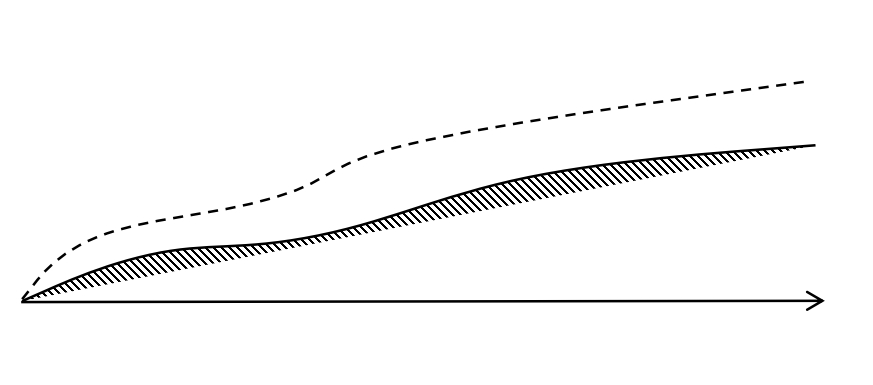}}
		\put(173,87){\small$\mathsf{S}$}
		\put(173,47){\small$\mathsf{W}$}
		\put(-48,65){\small$\rightarrow$}
		\put(-48,79){\tiny$\underline{u}<\bar{q}$}
		\put(175,20){\small$x$}
		\end{picture}
	\end{center}
	\caption{solution for $\underline{u}<\bar{q}$}\label{fig:curvedwedge}
\end{figure}
\begin{figure}[htbp]
	\setlength{\unitlength}{1bp}
	\begin{center}
		\begin{picture}(190,120)
		\put(30,-10){\includegraphics[scale=0.4]{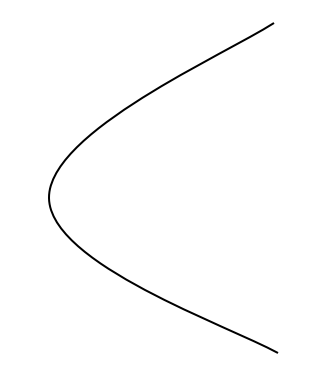}}
		\put(123,97){\small$\mathsf{S}=\mathsf{W}$}
		\put(-48,65){\small$\rightarrow$}
		\put(-48,79){\tiny$\underline{u}=\bar{q}$}
		\end{picture}
	\end{center}
	\caption{limit solution for $\underline{u}=\bar{q}$}\label{fig:bluntbodyS}
\end{figure}
\begin{figure}[htbp]
	\setlength{\unitlength}{1bp}
	\begin{center}
		\begin{picture}(190,190)
		\put(30,-20){\includegraphics[scale=0.4]{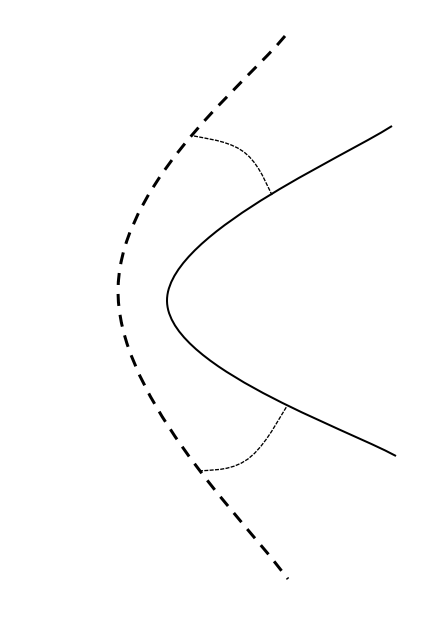}}
		\put(123,167){\small$\mathsf{S}$}
		\put(93,129){\tiny sonic curve}
		\put(155,132){\small$\mathsf{W}$}
		\put(-48,82){\small$\rightarrow$}
		\put(-48,96){\tiny$\underline{u}<\bar{q}$}
		\end{picture}
	\end{center}
	\caption{solution for $\underline{u}<\bar{q}$}\label{fig:bluntbody}
\end{figure}

In addition, careful observation shows that the intersection points between \eqref{eq:ShockpolarS} and line $u=b_0v$ include not only $(u, v)$ but also $O=(0, 0)$. Thus we can define another limiting solution by setting $(u, v)=(0, 0)$ and $\mathsf{S}:{(x, y):~x=x_O}$, where $x_O<0$ is a constant. It can be proved that this solution satisfies $\mathbf{(P)}$ in distribution sense as well. Based on it, the authors construct a detached transonic shock solution in \cite{BaeXiang2023AAM} for $0<\bar{q}-\underline{u}<<1$.

The paper is organized as follows: In Section 2, by regarding the coordinate $y$ as the unknown function, we introduce the hodograph transformation, reformulate the three governing conditions \eqref{eq:EulerEquationsNC}, \eqref{eq:RHCondtionb} and \eqref{eq:WallCondition} within this framework, and state our main results in the $(u, v)$-coordinates. In Section 3, we presents preliminary calculations under fundamental assumptions and derives expressions of the limiting solution. In Section 4, we construct the approximate free boundary and corresponding approximate solution.

\section{hodograph transformation and main theorem}\label{sect:HodographTransformation}
The hodograph transformation is a fundamental method for solving partial differential equations, see \cite{CourantFriedrichs1976AMS,Evans2010GSM,Morawetz1964CPAM,Morawetz2004JHDE}. For example, it has been applied to the research on simple wave in \cite{CourantFriedrichs1976AMS}. There are many advantages to the hodograph transformation for our equation: the nonlinear equation is convert to a linear one, the degenerate curve of the equation is fixed and the shock free boundary value condition might be transformed into a fixed boundary value condition, and so on. However, it also brings about some difficulties. A typical case is that the solid wall condition is converted into a free boundary condition, which is difficult to handle. Here, we introduce a variant of the hodograph transformation, where we no longer consider the potential function or the stream function as unknown function. Instead, we directly consider a coordinate as the unknown function. In this way, the solid wall boundary condition is transformed into a free boundary form consists of a Dirichlet boundary condition and a slope boundary condition, which has been previously investigated, see \cite{ChenFeldman2003JAMS}. Furthermore, the hodograph transformation has corresponding conditions of use, in which the velocity needs to have a significant variation. Our problem of hypersonic flow onto a large curved wedge happens to satisfy this condition. We believe that the approach adopt here can also be applied in numerical analysis.
\subsection{Second order linear equation of $y$ under hodograph transformation}
First, we transform the elliptic system into a second order equation of the coordinate $y$. In fact, if the map $(x, y)\rightarrow(u, v)$ is $C^2$ and nondegenerate, then its Jacobian is
\begin{equation*}
\begin{pmatrix}
u_x&u_y\\
v_x&v_y
\end{pmatrix}
=
\begin{pmatrix}
x_u&x_v\\
y_u&y_v
\end{pmatrix}^{-1}
=
\begin{pmatrix}
y_v&-x_v\\
-y_u&x_u
\end{pmatrix}/
\begin{vmatrix}
x_u&x_v\\
y_u&y_v
\end{vmatrix}=
\begin{vmatrix}
u_x&u_y\\
v_x&v_y
\end{vmatrix}
\begin{pmatrix}
y_v&-x_v\\
-y_u&x_u
\end{pmatrix}.
\end{equation*}
Let $J:=\begin{vmatrix}
u_x&u_y\\
v_x&v_y
\end{vmatrix}$, we have
\begin{equation}\label{eq:HodographJacobi}
u_x=Jy_v,~u_y=-Jx_v,~v_x=-Jy_u,~v_y=Jx_u.
\end{equation}
Therefore, \eqref{eq:EulerEquationsM} can be reduced to
\begin{equation*}
	\begin{pmatrix}
		y_v\\
		-y_u
	\end{pmatrix}
	+ 
	\begin{pmatrix}
		\frac{-2uv}{c^2-u^2}& \frac{c^2-v^2}{c^2-u^2}\\
		-1& 0
	\end{pmatrix}
	\begin{pmatrix}
		-x_v\\
		x_u
	\end{pmatrix}=0.
\end{equation*}
provided $J$ is nonsingular. We can solve $x_u$ and $x_v$ in above as follows
\begin{equation}\label{eq:DuXExpression}
\begin{cases}
\displaystyle x_u=-\frac{2uv}{c^2-v^2}y_u-\frac{c^2-u^2}{c^2-v^2}y_v;\\
\displaystyle x_v=y_u.
\end{cases}
\end{equation}
In view of
\begin{equation*}
\qnt{x_u}_v=\qnt{x_v}_u,
\end{equation*}
we have
\begin{equation*}
\qnt{y_u}_u+\qnt{\frac{2uv}{c^2-v^2}y_u+\frac{c^2-u^2}{c^2-v^2}y_v}_v=0.
\end{equation*}
Finally, we get the {\bf second order linear equation of $y$} as follows
\begin{equation}\label{eq:ySecondOrderEquation}
\begin{cases}
\displaystyle (c^2-v^2)y_{uu}+2uvy_{uv}+(c^2-u^2)y_{vv}+C_1y_u+C_2y_v=0;\\
\displaystyle C_1=\frac{2\gamma v^2+2c^2}{c^2-v^2}u;\\
\displaystyle C_2=\frac{(2c^2-u^2-v^2)-\gamma(u^2-v^2)}{c^2-v^2}v.
\end{cases}
\end{equation}
This second-order linear equation of $y$ is also a mixed-type equation: it is an elliptic equation in the subsonic region and hyperbolic in the supersonic region. Unlike quasilinear equation \eqref{eq:EulerEquationsNC}, this equation of $y$ has fixed regions of equation type.

\subsection{The fixed boundary value condition of $y$ under hodograph transformation for the shock front}
For the incoming flow with the constant velocity $(\underline{u}, 0)$, we introduce the parameter $\epsilon$ of incoming mass flux defined via
\begin{equation*}
\underline{\rho}\underline{u}=\qnt{\frac{\qnt{\gamma-1}\qnt{\bar{q}^2-\underline{u}^2}}{2\gamma}}^{\frac{1}{\gamma-1}}\underline{u}=\epsilon.
\end{equation*}
Then for $\abs{\underline{u}-\bar{q}}<<1$, the states of the incoming flow can be regarded as functions of $\epsilon<<1$ and
\begin{equation*}
\underline{u}\rightarrow\bar{q}-\quad\mbox{as}\quad\epsilon\rightarrow0+.
\end{equation*}
We rewrite \eqref{eq:shockpolar} in the following form
\begin{equation}\label{eq:ShockPolarGExpressione}
G(u, v, \epsilon):=(u-\frac{\epsilon}{\rho})(u-\underline{u}(\epsilon))+v^2=0.
\end{equation}
Differentiate it along the shock front
\begin{equation*}
G_u(u_x\dif x+u_y\dif y)+G_v(v_x\dif x+v_y\dif y)=0.
\end{equation*}
By \eqref{eq:RHCondtionb} and the hodograph expressions \eqref{eq:HodographJacobi}, we can reduce above to
\begin{equation*}
-G_u([v]y_v+[u]x_v)+G_v([v]y_u+[u]x_u)=0,
\end{equation*}
or equivalently
\begin{equation*}
-G_u(-[\rho u]y_v+[\rho v]x_v)+G_v(-[\rho u]y_u+[\rho v]x_u)=0.
\end{equation*}
By \eqref{eq:DuXExpression}, we can replace $\nabla_{(u, v)}x$ terms by $\nabla_{(u, v)}y$ to get the oblique derivative condition of $y$ along the shock polar \eqref{eq:ShockPolarGExpressione} in $(u, v)$-plane
\begin{equation}\label{eq:yShockObliqueDerivativeU}
\begin{split}
&\qnt{G_u(c^2-v^2)[u]-G_v(c^2-v^2)[v]+G_v[u](2uv)}y_u\\
&\quad\quad+\qnt{G_u(c^2-v^2)[v]+G_v(c^2-u^2)[u]}y_v=0,
\end{split}
\end{equation}
or equivalently
\begin{equation}\label{eq:yShockObliqueDerivativeRU}
\begin{split}
\qnt{G_u(c^2-v^2)[\rho v]+G_v(c^2-v^2)[\rho u]+G_v[\rho v](2uv)}y_u\\
\quad\quad\quad\quad+\qnt{-G_u(c^2-v^2)[\rho u]+G_v(c^2-u^2)[\rho v]}y_v=0.
\end{split}
\end{equation}
\subsection{The free boundary value condition of $y$ under hodograph transformation for the solid wall}
Via the velocity fields $(x, y)\mapsto(u, v)$, on the $(u, v)$ plane, the solid wall boundary is mapped to the curve $\mathcal{W}:u=u_b(v)$ on $(u, v)$ plane. For the {\bf slip boundary condition} \eqref{eq:WallCondition}, by convexity, the function $b'(y)$ is a monotonically increasing function from $[0, +\infty)$ to $[\flat, \sharp)$. Then we have
\begin{equation}\label{eq:yWallVelocityCondition}
y=\qnt{b'}^{-1}(\frac{u_b}{v})=:B(\frac{u_b}{v}).
\end{equation}
By \eqref{eq:WallCondition2nd}, $B\in C^\infty$ is an increasing function and
\begin{equation}\label{eq:BRange}
B:[\flat, \sharp)\mapsto[0,+\infty).
\end{equation}
In this paper, we further assume that
\begin{equation}\label{eq:Bassumptions}
\begin{cases}
B''>0;\\
B(k)=\frac{1}{\qnt{\sharp-k}^\alpha},\quad\mbox{for}\quad\sharp-\delta<k<\sharp,
\end{cases}
\end{equation}
where $\delta$ is a positive constant and belongs to $(0, \sharp-\flat)$. From the assumptions of $\mathsf{W}$, we know that the wedge has a slope of $\frac{1}{\sharp}$ at infinity. However, calculations reveal that different values of $\alpha$ represent two distinct obstacle shapes: (1) When $0<\alpha<1$, the wedge has an asymptote line $L$ with slope $\frac{1}{\sharp}$ at infinity. (2) When $\alpha>1$, the slope of the wedge tends towards $\frac{1}{\sharp}$ at infinity but has no asymptote; in this case, the wedge cross every straight line with slope $\frac{1}{\sharp}$. In fact, on the wedge, we have
\begin{equation*}
k=\frac{\dif x}{\dif y}.
\end{equation*}
Then \eqref{eq:yWallVelocityCondition} with \eqref{eq:Bassumptions} leads to
\begin{equation*}
\frac{\dif x}{\dif y}=\sharp-y^{-\frac{1}{\alpha}}.
\end{equation*}
It follows
\begin{equation*}
x=\sharp y-\frac{\alpha}{\alpha-1}y^{1-\frac{1}{\alpha}}+C\quad\mbox{for}\quad x>>1.
\end{equation*}
This curve has an asymptote line for $0<\alpha<1$.

\begin{figure}[htbp]
	\setlength{\unitlength}{1bp}
	\begin{center}
		\begin{picture}(190,190)
		\put(-60,-10){\includegraphics[scale=0.4]{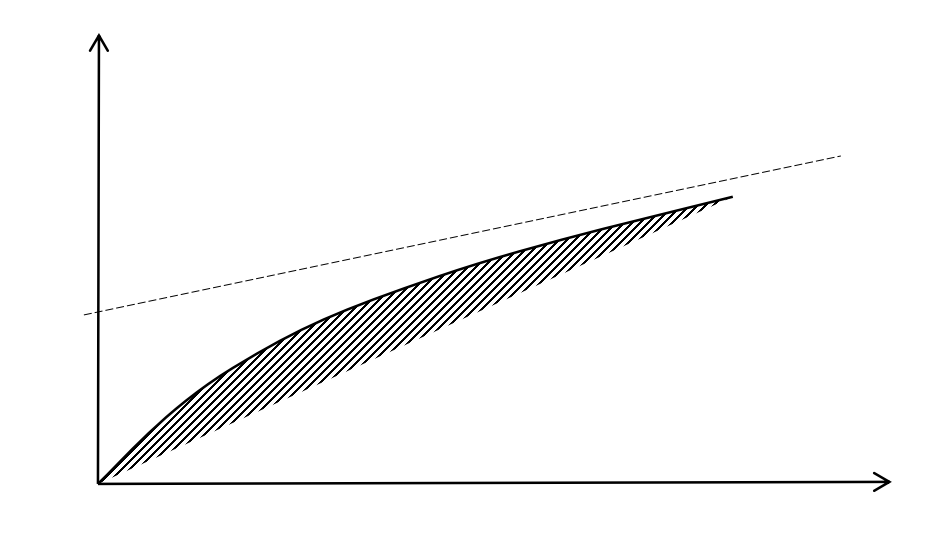}}
		\put(-42,145){\small$y$}
		\put(153,82){\small$\mathsf{W}$}
		\put(-23,72){\small$L$}
		\put(193,20){\small$x$}
		\put(-44,2){\small$O$}
		\end{picture}
	\end{center}
	\caption{$0<\alpha<1$}\label{fig:alphal1}
\end{figure}
\begin{figure}[htbp]
	\setlength{\unitlength}{1bp}
	\begin{center}
		\begin{picture}(190,190)
		\put(-60,-10){\includegraphics[scale=0.4]{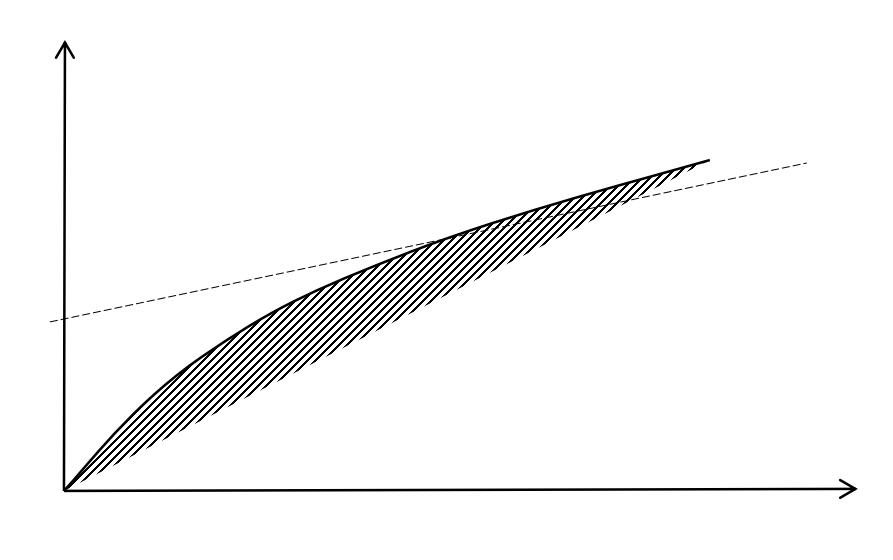}}
		\put(-53,140){\small$y$}
		\put(123,108){\small$\mathsf{W}$}
		\put(-23,72){\small$L$}
		\put(190,20){\small$x$}
		\put(-52,0){\small$O$}
		\end{picture}
	\end{center}
	\caption{$\alpha>1$}\label{fig:alphag1}
\end{figure}

For the {\bf wall position condition} \eqref{eq:WallPositionCondition}, under the hodograph transformation, recalling $(x, y)$ is a vector function of $(u, v)$, for \eqref{eq:WallPositionCondition}, we differentiate it with respect to $v$ on $(u, v)$ plane to have
\begin{equation*}
x_v+x_u\frac{\dif u_b}{\dif v}=\frac{u}{v}(y_v+y_u\frac{\dif u_b}{\dif v}),
\end{equation*}
where we apply \eqref{eq:WallCondition} to replace $b'(y)$ by $\frac{u}{v}$. Then substitute \eqref{eq:DuXExpression} into above, we get the free boundary value condition corresponding to the solid wall
\begin{equation}\label{eq:yWallPositionCondition}
-\qnt{v\qnt{\qnt{2uv}y_u+\qnt{c^2-u^2}y_v}+\qnt{c^2-v^2}uy_u}\frac{\dif u_b}{\dif v}=\qnt{c^2-v^2}\qnt{uy_v-vy_u},
\end{equation}
where $u=u_b(v)$.

\subsection{Main theorem}
To conclude, we reduce the free boundary value problem \eqref{eq:EulerEquationsNC} with \eqref{eq:RHCondtionb} and \eqref{eq:WallCondition} to the free boundary value problem for the unknown function $y$ on $(u, v)$ plane as follows:

\begin{itemize}
\item 
For $y$, we get the second order differential equation \eqref{eq:ySecondOrderEquation}.

\item 
For the shock, we get the oblique derivative boundary value condition \eqref{eq:yShockObliqueDerivativeU} ( or equivalent \eqref{eq:yShockObliqueDerivativeRU}) along the shock polar \eqref{eq:ShockPolarGExpressione}.

\item 
For the wall, we get the free boundary $\mathcal{W}$ with
{\bf wall position condition} \eqref{eq:yWallPositionCondition} and the {\bf slip boundary condition} \eqref{eq:yWallVelocityCondition}.
\end{itemize}
In the following, we abbreviate this problem as $\mathbf{(P_y)}$. Then, our main result is

\begin{figure}[htbp]
\setlength{\unitlength}{1bp}
\begin{center}
\begin{picture}(190,190)
\put(-60,-10){\includegraphics[scale=0.4]{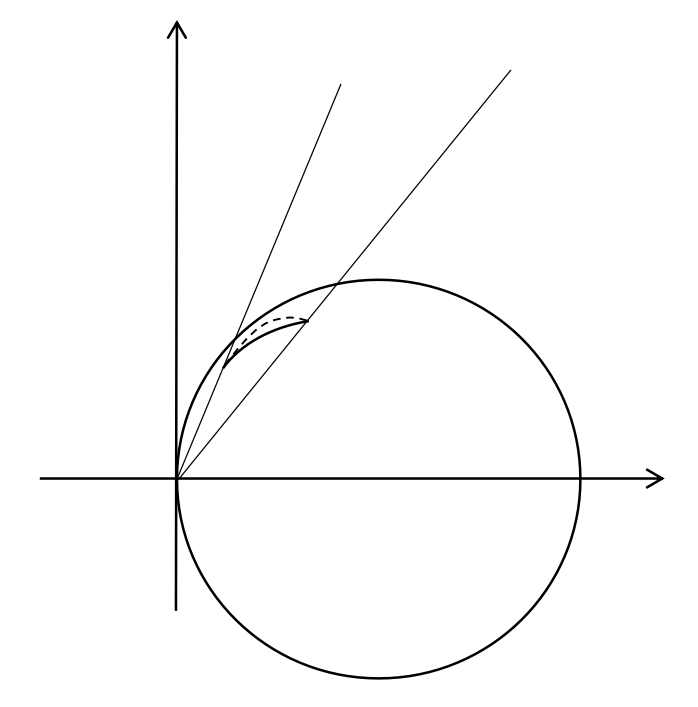}}
\put(-19,183){\small$v$}
\put(25,180){\tiny$\frac{u}{v}=\flat$}
\put(78,184){\tiny$\frac{u}{v}=\sharp$}
\put(-19,183){\small$v$}
\put(93,52){\tiny$u=\bar{q}$}
\put(90,110){\tiny$\mathcal{S}:\epsilon=0$}
\put(31,101){\tiny$\mathcal{S}:\epsilon<<1$}
\put(133,65){\small$u$}
\put(-18,50){\small$O$}
\end{picture}
\end{center}
\caption{dashed line: $\mathcal{W}$}\label{fig:shockpolarP}
\end{figure}
\begin{thm}\label{thm:MainTheorem}
For the problem $\mathbf{(P_y)}$ with $\gamma=2$, we can construct an approximate solution consist of a wall boundary $\mathcal{W}^\Gamma$ and a solution $y^\Gamma$ provided $\epsilon<\epsilon_0$, where $\epsilon_0$ is depending on the wall. $y^\Gamma$ satisfies \eqref{eq:ySecondOrderEquation} with the boundary value condition \eqref{eq:yShockObliqueDerivativeU} on \eqref{eq:ShockPolarGExpressione} and the Dirichlet condition \eqref{eq:yWallVelocityCondition} on $\mathcal{W}^\Gamma$. Along the wall boundary $\mathcal{W}^\Gamma$, for \eqref{eq:yWallPositionCondition}, we have the following approximate condition
\begin{equation}\label{eq:ApproximateBoundaryEstimate}
\abs{\frac{\dif u_b}{\dif v}+\qnt{c^2-v^2}\qnt{uy_v-vy_u}/\qnt{v\qnt{\qnt{2uv}y_u+\qnt{c^2-u^2}y_v}+\qnt{c^2-v^2}uy_u}}\leq C\epsilon^{1+\beta},
\end{equation}
where $\beta\in(0, 1)$ is a constant. Meanwhile, we can define $x^\Gamma$ via \eqref{eq:DuXExpression} and the map from $(u, v)$ to $(x^\Gamma, y^\Gamma)$ is invertible.
\end{thm}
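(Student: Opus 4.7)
The plan is to bypass the fully free boundary problem $\mathbf{(P_y)}$ by freezing the wall to an \emph{approximate} curve $\mathcal{W}^\Gamma$ built from the $\epsilon=0$ limit solution, and then solving the resulting \emph{fixed} mixed boundary value problem on the narrow region between $\mathcal{W}^\Gamma$ and the perturbed shock polar \eqref{eq:ShockPolarGExpressione}. The key observation is that in the hodograph plane both $\mathcal{W}$ and the shock polar collapse to the same arc of $u^2+v^2-\bar q u=0$ as $\epsilon\to 0$, so the relevant domain is a strip whose width scales like $\epsilon$; this smallness is the tool used both to close the scheme on a fixed wall and to recover the omitted wall position condition \eqref{eq:yWallPositionCondition} at leading order.

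First I would use the limit analysis of Section 3 to extract the zeroth-order profile $y^{(0)}(u,v)$ on the limiting shock polar arc and, exploiting the simplification $\gamma=2$ in \eqref{eq:ySecondOrderEquation}, compute a formal first correction $y^{(1)}$ by expanding the coefficients, \eqref{eq:yShockObliqueDerivativeU} and \eqref{eq:yWallPositionCondition} in $\epsilon$. The approximate wall $\mathcal{W}^\Gamma$ is then defined as the locus on which $y^{(0)}+\epsilon y^{(1)}$ coincides with the slip value $B(u/v)$ of \eqref{eq:yWallVelocityCondition}; by construction $\mathcal{W}^\Gamma$ is an $O(\epsilon^{1+\beta})$ perturbation of the true free boundary image, and \eqref{eq:yWallPositionCondition} is satisfied by $y^{(0)}+\epsilon y^{(1)}$ on $\mathcal{W}^\Gamma$ to the same order.

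Next I would solve the fixed boundary value problem on the domain $\Omega^\Gamma$ bounded by \eqref{eq:ShockPolarGExpressione} and $\mathcal{W}^\Gamma$, namely equation \eqref{eq:ySecondOrderEquation} with the oblique derivative condition \eqref{eq:yShockObliqueDerivativeU} on the shock polar and the Dirichlet condition \eqref{eq:yWallVelocityCondition} on $\mathcal{W}^\Gamma$. Because the cross-strip width of $\Omega^\Gamma$ is $O(\epsilon)$, standard Schauder constants degenerate as $\epsilon\to 0$, so I would set up uniform weighted Schauder estimates with a weight that simultaneously encodes the strip width $\epsilon$ and the distance to the endpoint corners of $\Omega^\Gamma$ (where $v\to 0^+$, that is, the image of the far field $y\to+\infty$ on $\mathsf{W}$, and where $\mathcal{W}^\Gamma$ meets the shock polar). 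Obliqueness of \eqref{eq:yShockObliqueDerivativeU} is checked directly from the geometry of \eqref{eq:ShockpolarS}, and existence and uniqueness of $y^\Gamma$ then follow from the uniform estimate together with a standard continuity argument.

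Finally, writing $y^\Gamma=y^{(0)}+\epsilon y^{(1)}+r^\Gamma$ and substituting into \eqref{eq:yWallPositionCondition}, the $O(1)$ and $O(\epsilon)$ contributions cancel by the very construction of $\mathcal{W}^\Gamma$, and the weighted Schauder estimate bounds the residual $r^\Gamma$ by $C\epsilon^{1+\beta}$, yielding \eqref{eq:ApproximateBoundaryEstimate}. Invertibility of $(u,v)\mapsto(x^\Gamma,y^\Gamma)$ is then a consequence of differentiating \eqref{eq:DuXExpression}: at $\epsilon=0$ the Jacobian associated with the limit profile is bounded away from zero on $\Omega^\Gamma$, and this lower bound is preserved under the $O(\epsilon)$ perturbation supplied by $y^\Gamma$. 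The main obstacle is precisely the uniform-in-$\epsilon$ weighted Schauder theory in the narrow strip, where one must handle three competing degeneracies at once: the vanishing cross-strip width, the degenerate corner at $v=0$ along which the coefficients of \eqref{eq:ySecondOrderEquation} collapse and must be carefully tracked, and the endpoint whose geometry is dictated by the prescribed blowup of $B$ at $\sharp$ in \eqref{eq:Bassumptions}.
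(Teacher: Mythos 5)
Your overall skeleton --- build an approximate wall from the $\epsilon=0$ limit state, solve the resulting \emph{fixed} mixed boundary value problem in the $O(\epsilon)$-wide strip between the perturbed shock polar and the approximate wall using uniform weighted Schauder estimates, then verify the omitted slope condition \eqref{eq:yWallPositionCondition} up to $O(\epsilon^{1+\beta})$ --- coincides with the paper's. The genuine gap is in your final step, where you assert that the residual $r^\Gamma=y^\Gamma-y^{(0)}-\epsilon y^{(1)}$ is bounded by $C\epsilon^{1+\beta}$ in a norm strong enough to control \eqref{eq:yWallPositionCondition}, which involves $\nabla y$. That would require a $C^1$ remainder estimate of order $\epsilon^{1+\beta}$ after subtracting an honest first corrector, and nothing in your scheme produces it: the narrow-strip estimates actually available (Theorem \ref{thm:PerturbationEstimates}) give $\abs{\nabla y^\Gamma-\nabla\Upsilon}\leq C\epsilon(\sharp-k)^{-\alpha-1}$ but only $\abs{\nabla^2 y^\Gamma-\nabla^2\Upsilon}\leq C\epsilon^{\beta}(\sharp-k)^{-\alpha-2}$, and this $\epsilon^\beta$ loss at second order blocks the conclusion that a two-term ansatz is accurate to $o(\epsilon)$ in $C^1$. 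In addition, you never verify that your level-set definition of $\mathcal{W}^\Gamma$ (the locus where $y^{(0)}+\epsilon y^{(1)}=B(u/v)$) actually lies inside the strip $0\leq\varrho\leq\epsilon$, which is what makes the domain width $O(\epsilon)$ in the first place; the paper needs the sign computations \eqref{eq:DeF'DGexpression}--\eqref{eq:DeF-Fg'DGexpression} and the ODE construction \eqref{eq:ApproximateUpdateODE} of $\Gamma$ precisely to secure this (Theorem \ref{thm:GammaProperties}), together with a monotone-approximation argument to handle the unbounded Dirichlet datum $B$ near $k=\sharp$.

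The missing idea is the structural observation on which the paper's gain of the extra factor $\epsilon^\beta$ rests: condition \eqref{eq:yWallPositionCondition} depends on $\nabla y$ only through its direction $y_u/y_v$, and on the shock $\varrho=\epsilon$ this direction is completely determined by the oblique condition \eqref{eq:yShockObliqueDerivativeUM}, independently of the unknown solution. Hence the slope field $F$ of \eqref{eq:Fexpression} is known \emph{exactly} on one edge of the strip, and the error committed on the other edge is $\bigl(\partial_\varrho F|_{y^\Gamma}-\partial_\varrho F|_{\Upsilon}\bigr)\cdot(\Gamma-\epsilon)$: the first factor is $O(\epsilon^\beta)$ because $\partial_\varrho F$ involves second derivatives, and the second is $O(\epsilon)$ by the width of the strip, giving $\epsilon^{1+\beta}$. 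Without exploiting this cancellation, a direct expansion of the kind you propose yields only an $O(\epsilon)$ error in the slope condition, which falls short of \eqref{eq:ApproximateBoundaryEstimate}.
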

\begin{rem}
We do not obtain an exact solution, but only an approximate one. This solution has an error that is a higher-order infinitesimal than the flux. Moreover, depending on the narrow ellipse estimation, the error constant in \eqref{eq:ApproximateBoundaryEstimate} is uniform. Due to the extreme difficulty in calculating highly disturbed flow fields, our findings offer valuable insights applicable to solving more general problems.
\end{rem}
\section{Some settings and limit solution}
Flow field with significant deflection is tough to calculate. We often need to compute bounds of some quantities across a vast range, which is difficult via Taylor series expansion. Here, we perform the calculation under specific simplifying assumptions. One is $\gamma=2$, where both density and the square of the speed of sound become polynomials in velocity. Consequently, many quantities reduce to polynomials, whose elimination relations are relatively simple and amenable to symbolic computation software, see \cite{Hu2025,HuQuJLMS2025}. Another is the hypersonic approximation, where the shock polar simplifies to a circle, leading to straightforward relationships among variables that can be computed exactly, see \cite{Hu2018JMAA,HuZhang2019SIAM}.
\subsection{$\gamma=2$ and $\epsilon=0$}
Under the assumption $\gamma=2$, pressure $p=\rho^2$ and sonic $c^2=\frac{\dif p}{\dif \rho}=2\rho$, the Bernoulli's law \eqref{eq:BernoulliLaw} becomes
\begin{equation}\label{eq:BernoulliLaw2}
\frac{u^2+v^2}{2}+2\rho=\frac{\bar{q}^2}{2}.
\end{equation}
Thus
\begin{equation}\label{eq:RhoCExpression2}
\rho=\frac{\bar{q}^2-q^2}{4},\quad c^2=\frac{\bar{q}^2-q^2}{2}.
\end{equation}
For the sonic point, we have
\begin{equation*}
q^2=c^2=\frac{\bar{q}^2-q^2}{2}.
\end{equation*}
Then the critical sonic is $c_0=\frac{\sqrt{3}}{3}\bar{q}$. It can be proved that the equation is elliptic for $q<c_0$ and hyperbolic for $q>c_0$.

We have
\begin{equation}\label{eq:RhoCDifExpression2}
\qnt{\rho_u, \rho_v}=-\frac{1}{2}\qnt{u, v},\quad \qnt{\qnt{c^2}_u, \qnt{c^2}_v}=-\qnt{u, v}.
\end{equation}
The equation \eqref{eq:ySecondOrderEquation} becomes
\begin{equation}\label{eq:ySecondOrderEquation2}
\begin{cases}
\displaystyle(c^2-v^2)y_{uu}+2uvy_{uv}+(c^2-u^2)y_{vv}+C_1y_u+C_2y_v=0;\\
\displaystyle C_1=\frac{4 v^2+2c^2}{c^2-v^2}u;\\
\displaystyle C_2=\frac{(2c^2-u^2-v^2) - 2(u^2-v^2)}{c^2-v^2}v.
\end{cases}
\end{equation}
For $\epsilon=0$, the shock polar \eqref{eq:ShockPolarGExpressione} is reduced to \eqref{eq:ShockpolarS}.
The upper sonic point on it is 
\begin{equation*}
(u, v)=\qnt{\frac{\bar{q}}{3}, \frac{\sqrt{2}\bar{q}}{3}}.
\end{equation*}

\subsection{Limit solution $\Upsilon$}
In this part, we introduce the limit solution $\Upsilon$ for $\mathbf{(P_y)}$. It is constructed by restricting all conditions to \eqref{eq:ShockpolarS}. Starting from this section, we carry out many calculations using the computational software Mathematica. In the programme, the variables $u, v, \underline{u}, \epsilon, \bar{q}$ are recorded as
\begin{lstlisting}
u, v, U, e, Q
\end{lstlisting}
respectively. Then we record $c^2$ and $\rho$ as follows functions
\begin{lstlisting}
CC[u, v], Rho[u, v]
\end{lstlisting}
respectively. Our setting is
\begin{lstlisting}
Clear[u, v, U, e];
Rho[u_, v_] := (Q^2 - u^2 - v^2)/4;
CC[u_, v_] := (Q^2 - u^2 - v^2)/2;
G[u_, v_, U_, e_] := (u - e/Rho[u, v]) (u - U) + v^2;
InF[U_, e_] := Rho[U, 0] U - e;
D[G[u, v, U, e], u];
Gu[u_, v_, U_, e_] := 
u - (4 e)/(
Q^2 - u^2 - 
v^2) + (u - U) (1 - (8 e u)/(Q^2 - u^2 - v^2)^2);
D[G[u, v, U, e], v];
Gv[u_, v_, U_, e_] := 
2 v - (8 e (u - U) v)/(Q^2 - u^2 - v^2)^2;
vv0u = {v^x_ /; x >= 2 -> u (Q - u) v^(x - 2)};
Ue0Q = {U -> Q, e -> 0};
RestrictS[expr_] := 
Module[{temp}, temp = expr /. Ue0Q; 
temp = Simplify[temp]; 
temp = temp //. vv0u; 
FullSimplify[temp]
];
RHu[u_, v_, U_, e_] := u - U;
RHv[u_, v_, U_, e_] := v;
\end{lstlisting}

\begin{figure}[htbp]
\setlength{\unitlength}{1bp}
\begin{center}
\begin{picture}(190,190)
\put(-60,-10){\includegraphics[scale=0.4]{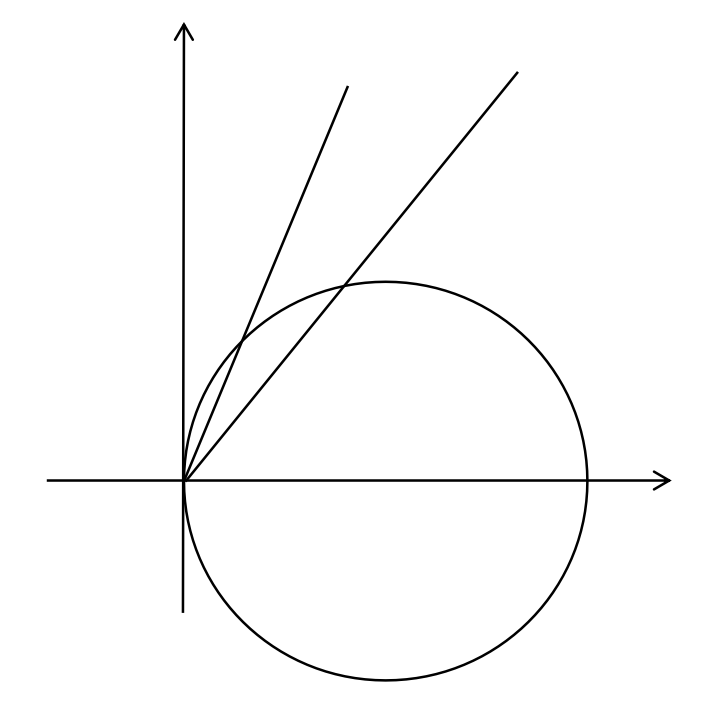}}
\put(-19,183){\small$v$}
\put(27,182){\tiny$\frac{u}{v}=\flat$}
\put(80,186){\tiny$\frac{u}{v}=\sharp$}
\put(-19,183){\small$v$}
\put(94,53){\tiny$u=\bar{q}$}
\put(92,110){\tiny$\mathcal{S}:\epsilon=0$}
\put(133,67){\small$u$}
\put(-18,50){\small$O$}
\end{picture}
\end{center}
\caption{Limit solution $\Upsilon$}\label{fig:shockpolarS}
\end{figure}

First, on the shock polar \eqref{eq:ShockpolarS}, 
\begin{equation*}
(G_u, G_v)=(2u-\bar{q}, 2v).
\end{equation*}
The coefficients of the oblique derivative condition \eqref{eq:yShockObliqueDerivativeU} are
\begin{equation*}
G_u(c^2-v^2)[u]-G_v(c^2-v^2)[v]+G_v[u](2uv)=-\bar{q}(u-\bar{q})^2\frac{(6u-\bar{q})}{2},
\end{equation*}
\begin{lstlisting}
RestrictS[
Gu[u, v, U, e] (CC[u, v] - v^2) RHu[u, v, U, e] - 
Gv[u, v, U, e] (CC[u, v] - v^2) RHv[u, v, U, e] + 
Gv[u, v, U, e] (2 u v) RHu[u, v, U, e]]
\end{lstlisting}
and
\begin{equation*}
G_u(c^2-v^2)[v]+G_v(c^2-u^2)[u]=-3\bar{q}\frac{(u-\bar{q})(2u-\bar{q})}{2}v.
\end{equation*}
\begin{lstlisting}
RestrictS[
Gu[u, v, U, e] (CC[u, v] - v^2) RHv[u, v, U, e] + 
Gv[u, v, U, e] (CC[u, v] - u^2) RHu[u, v, U, e]]
\end{lstlisting}
Thus we can deduce the oblique derivative condition of $\Upsilon$ on the shock polar \eqref{eq:ShockpolarS} as follows
\begin{equation}\label{eq:ShockPolarObliqueConditionS}
(u-\bar{q})(6u-\bar{q})\Upsilon_u+3(2u-\bar{q})v\Upsilon_v=0.
\end{equation}
Here and in subsequent discussions, particularly when working along \eqref{eq:ShockpolarS}, we apply the substitution rule: $v^2 \to u(\bar{q}-u)$.

Second, letting the wall free boundary $\mathcal{W}$ coincides to $\mathcal{S}$ and setting $\epsilon=0$, for the Dirichlet wall boundary value condition
\begin{equation}\label{eq:WallConditionBS}
\Upsilon=B(\frac{u}{v}),
\end{equation}
differentiating above along the shock polar \eqref{eq:ShockpolarS}, we have
\begin{equation*}
\Upsilon_u\frac{\dif u}{\dif v}+\Upsilon_v=B'(\frac{u}{v})(\frac{1}{v}\frac{\dif u}{\dif v}-\frac{u}{v^2}),
\end{equation*}
where
\begin{equation*}
\frac{\dif u}{\dif v}=-\frac{G_v}{G_u}=-\frac{2v}{2u-\bar{q}}.
\end{equation*}
We get the derivative form of \eqref{eq:WallConditionBS}
\begin{equation}\label{eq:WallConditionBS1}
2v\Upsilon_u-(2u-\bar{q})\Upsilon_v=B'(\frac{u}{v})\frac{\bar{q}}{\bar{q}-u}.
\end{equation}

\medskip
{\bf First order case.} Combine \eqref{eq:WallConditionBS1} with \eqref{eq:ShockPolarObliqueConditionS}, we have
\begin{equation*}
\begin{cases}
(u-\bar{q})(6u-\bar{q})\Upsilon_u+3(2u-\bar{q})v\Upsilon_v=0;\\
2v\Upsilon_u+(\bar{q}-2u)\Upsilon_v=B'(\frac{u}{v})\frac{\bar{q}}{\bar{q}-u}.
\end{cases}
\end{equation*}
This can be solved out
\begin{equation*}
\qnt{\Upsilon_u, \Upsilon_v}=B'(\frac{u}{v})\qnt{\frac{3v}{(\bar{q}-u)^2}, \frac{\bar{q}-6u}{(\bar{q}-2u)(\bar{q}-u)}}.
\end{equation*}
\begin{lstlisting}
{y0u, y0v}=RestrictS[
LinearSolve[{{(u - Q) (6 u - Q), 3 (2 u - Q) v}, {2 v, Q - 2 u}}, {0,
b  Q/(Q - u)}]]
\end{lstlisting}
where $\qnt{\Upsilon_u, \Upsilon_v}$ and $B'$ are recorded as
\begin{lstlisting}
{y0u, y0v}
\end{lstlisting}
and
\begin{lstlisting}
b
\end{lstlisting}
respectively.

Particular, for the wall position condition \eqref{eq:yWallPositionCondition}, restricting the coefficients on the shock polar \eqref{eq:ShockpolarS}, we have
\begin{equation}\label{eq:WallBoundaryPositionS}
-\qnt{v\frac{(u+\bar{q})(\bar{q}-2u)}{2}y_v+\frac{\qnt{\bar{q}-u}\qnt{\bar{q}+2u}}{2}uy_u}\frac{\dif u}{\dif v}=\frac{(u-\bar{q})(2u-\bar{q})}{2}\qnt{uy_v-vy_u}.
\end{equation}
This formula is naturally satisfied for $\qnt{y_u, y_v}=\qnt{\Upsilon_u, \Upsilon_v}$ along \eqref{eq:ShockpolarS}.

\medskip
{\bf Second order case.} First, for \eqref{eq:ShockPolarObliqueConditionS}, differentiate it along \eqref{eq:ShockpolarS} with respect to $v$, we get
\begin{equation*}
\frac{\dif}{\dif v}\qnt{(u-\bar{q})(6u-\bar{q})\Upsilon_u+3(2u-\bar{q})v\Upsilon_v}=0.
\end{equation*}
It is
\begin{equation*}
\begin{split}
&(u-\bar{q})(6u-\bar{q})\qnt{-\frac{2v}{2u-\bar{q}}\Upsilon_{uu}+\Upsilon_{uv}}+3(2u-\bar{q})v\qnt{-\frac{2v}{2u-\bar{q}}\Upsilon_{uv}+\Upsilon_{vv}}\\
&\quad+\qnt{-\frac{2v}{2u-\bar{q}}(6u-\bar{q})-\frac{12v}{2u-\bar{q}}(u-\bar{q})}\Upsilon_u+\qnt{-\frac{12v^2}{2u-\bar{q}}+6u-3\bar{q}}\Upsilon_v=0.
\end{split}
\end{equation*}
This can be simplified as follows
\begin{equation}\label{eq:ShockPolarObliqueConditionS2}
\begin{split}
&2v(\bar{q}-u)(\bar{q}-6u)\Upsilon_{uu}+(\bar{q}-u)(\bar{q}-12u)(\bar{q}-2u)\Upsilon_{uv}-3(\bar{q}-2u)^2v\Upsilon_{vv}\\
&\quad+2v(12u-7\bar{q})\Upsilon_u-3\qnt{8u^2-8u\bar{q}+\bar{q}^2}\Upsilon_v=0.
\end{split}
\end{equation}

Second, differentiate \eqref{eq:WallConditionBS1} along \eqref{eq:ShockpolarS} with respect to $v$, we have
\begin{equation*}
\frac{\dif }{\dif v}\qnt{2v\Upsilon_u+(\bar{q}-2u)\Upsilon_v}=\frac{\dif }{\dif v}\qnt{B'(\frac{u}{v})\frac{\bar{q}}{\bar{q}-u}}.
\end{equation*}
It is
\begin{equation*}
\begin{split}
&2v\qnt{-\frac{2v}{2u-\bar{q}}\Upsilon_{uu}+\Upsilon_{uv}}+(\bar{q}-2u)\qnt{-\frac{2v}{2u-\bar{q}}\Upsilon_{uv}+\Upsilon_{vv}}+2\Upsilon_u+\frac{4v}{2u-\bar{q}}\Upsilon_v\\
&=B''(\frac{u}{v})\frac{u'v-u}{v^2}\frac{\bar{q}}{\bar{q}-u}+B'(\frac{u}{v})\frac{\bar{q}}{(\bar{q}-u)^2}u'\\
&=B''(\frac{u}{v})\frac{\bar{q}}{u(\bar{q}-u)^2}\qnt{-\frac{2v^2}{2u-\bar{q}}-u}-B'(\frac{u}{v})\frac{\bar{q}}{(\bar{q}-u)^2}\frac{2v}{2u-\bar{q}}\\
&=B''(\frac{u}{v})\frac{\bar{q}}{u(\bar{q}-u)^2}\qnt{-\frac{2v^2+2u^2-u\bar{q}}{2u-\bar{q}}}-B'(\frac{u}{v})\frac{\bar{q}}{(\bar{q}-u)^2}\frac{2v}{2u-\bar{q}}\\
&=-B''(\frac{u}{v})\frac{\bar{q}^2}{(\bar{q}-u)^2\qnt{2u-\bar{q}}}-B'(\frac{u}{v})\frac{2\bar{q}v}{(\bar{q}-u)^2\qnt{2u-\bar{q}}}.
\end{split}
\end{equation*}
This can be simplified as
\begin{equation}\label{eq:WallConditionBS2}
\begin{split}
&4v^2\Upsilon_{uu}-4v\qnt{2u-\bar{q}}\Upsilon_{uv}+\qnt{2u-\bar{q}}^2\Upsilon_{vv}-2\qnt{2u-\bar{q}}\Upsilon_u-4v\Upsilon_v\\
&=B''(\frac{u}{v})\frac{\bar{q}^2}{(\bar{q}-u)^2}+B'(\frac{u}{v})\frac{2\bar{q}v}{(\bar{q}-u)^2}.
\end{split}
\end{equation}

Finally, we restrict \eqref{eq:ySecondOrderEquation2} only on the shock polar \eqref{eq:ShockpolarS} to get the equation for $\Upsilon$ on it
\begin{equation}\label{eq:y2S}
\begin{cases}
\displaystyle\frac{(u-\bar{q})(2u-\bar{q})}{2}\Upsilon_{uu}+2uv\Upsilon_{uv}+\frac{(u+\bar{q})(\bar{q}-2u)}{2}\Upsilon_{vv}+C_1\Upsilon_u+C_2\Upsilon_v=0;\\
\displaystyle C_1=\frac{2\gamma v^2+2c^2}{c^2-v^2}u=\frac{4 (\bar{q}-u)u+\bar{q}^2-\bar{q}u}{\frac{\bar{q}^2-\bar{q}u}{2}+u(u-\bar{q})}u= \frac{2u\qnt{4u+\bar{q}}}{\bar{q}-2u};\\
\displaystyle C_2=\frac{(2c^2-u^2-v^2)-\gamma(u^2-v^2)}{c^2-v^2}v=2\frac{(\bar{q}^2-\bar{q}u-\bar{q}u)-2(2u^2-\bar{q}u)}{(u-\bar{q})(2u-\bar{q})}v\\
\displaystyle\quad=\frac{2(\bar{q}+2u)v}{\bar{q}-u}.
\end{cases}
\end{equation}
\begin{lstlisting}
RestrictS[(4 v^2 + 2 CC[u, v]) u/(CC[u, v] - v^2)]
RestrictS[((2 CC[u, v] - u^2 - v^2) - 
2 (u^2 - v^2)) v/(CC[u, v] - v^2)]
\end{lstlisting}
By \eqref{eq:RhoCExpression2}, along the shock polar \eqref{eq:ShockpolarS}, above equation is elliptic for
\begin{equation*}
u<\frac{\bar{q}}{3}.
\end{equation*}
Meanwhile, the slope
\begin{equation*}
\frac{u}{v}\leq\frac{\sqrt{2}}{2}.
\end{equation*}

Therefore, solving the linear equations \eqref{eq:ShockPolarObliqueConditionS2}, \eqref{eq:WallConditionBS2} and \eqref{eq:y2S}, we get the second order derivatives of $\Upsilon$ as follows
\begin{equation*}
\begin{cases}
\displaystyle\Upsilon_{uu}=-\frac{(\bar{q}-11u)B''}{(\bar{q}-u)^3}+\frac{4v(2\bar{q}-3u)(\bar{q}+4u) B'}{(\bar{q}-u)^3(\bar{q}-2u)\bar{q}};\\
\displaystyle\Upsilon_{uv}=\frac{(5\bar{q}-22u)vB''}{(\bar{q}-u)^3(\bar{q}-2u)}+\frac{\qnt{3\bar{q}^3-16u\bar{q}^2-52u^2\bar{q}+96u^3}B'}{\bar{q}(\bar{q}-u)^2(\bar{q}-2u)^2};\\
\displaystyle\Upsilon_{vv}=\frac{\qnt{\bar{q}^2-16u\bar{q}+44u^2}B''}{(\bar{q}-u)^2(\bar{q}-2u)^2}-\frac{4v\qnt{3\bar{q}^3-6u\bar{q}^2-32u^2\bar{q}+48u^3}B'}{\bar{q}(\bar{q}-u)^2(\bar{q}-2u)^3}.
\end{cases}
\end{equation*}
\begin{lstlisting}
{y0uu, y0uv, y0vv}=RestrictS[
LinearSolve[{{2 v (Q - u) (Q - 6 u), (Q - u) (Q - 12 u) (Q - 
2 u), -3 (Q - 2 u)^2 v}, {4 v^2, 
4 v (Q - 2 u), (Q - 2 u)^2}, {(u - Q) (2 u - Q)/2, 
2 u  v, (u + Q) (Q - 2 u)/2}}, {-2 v (12 u - 7 Q) y0u + 
3 (8 u^2 - 8 u  Q + Q^2) y0v, 
2 (2 u - Q) y0u + 4 v  y0v + c  Q^2/(Q - u)^2 + 
b  2 Q  v/(Q - u)^2, -2 u (4 u + Q)/(Q - 2 u) y0u - 
2 (Q + 2 u) v/(Q - u) y0v}]]
\end{lstlisting}
where $\qnt{\Upsilon_{uu}, \Upsilon_{uv}, \Upsilon_{vv}}$ and $B''$ are recorded as
\begin{lstlisting}
{y0uu, y0uv, y0vv}
\end{lstlisting}
and
\begin{lstlisting}
c
\end{lstlisting}
respectively.

\section{Approximate Solution}
In this section, we construct our approximate solution for $\mathbf{(P_y)}$ and prove Theorem \ref{thm:MainTheorem}. The proof hinges on a key observation: the slope-type free boundary condition \eqref{eq:yWallPositionCondition} depends on a quantity $\frac{y_u}{y_v}(u, v)$ (or the image of $\nabla y(u, v)$ in the project space $\qnt{\nabla y}^\circ(u, v)$), which is itself determined by condition \eqref{eq:ShockPolarObliqueConditionS} on the shock front $\mathcal{S}$. Indeed, close inspection of equation \eqref{eq:WallBoundaryPositionS} reveals that, for equation \eqref{eq:WallBoundaryPositionS} to hold, only condition \eqref{eq:ShockPolarObliqueConditionS} is required – solving \eqref{eq:ShockPolarObliqueConditionS} and \eqref{eq:WallConditionBS1} simultaneously to obtain $\nabla \Upsilon$ is unnecessary. Consequently, if the limit solution $\Upsilon$ represents the asymptotic state of solutions to problem $\mathbf{(P_y)}$, the error in its derivative is $\varepsilon(\approx\epsilon^\beta)$. Utilizing $\Upsilon$'s derivatives, we extend $\frac{y_u}{y_v}(u, v)$, which is originally defined on $\mathcal{S}$, to a neighbourhood of $\mathcal{S}$. Through expression \eqref{eq:yWallPositionCondition}, this yields an approximate slope vector field. Notably, the error between this approximate slope vector field and that of the exact solution scales as $\epsilon\varepsilon$ (where $\epsilon$ is the domain width), representing a higher-order infinitesimal compared to $\epsilon$. This approximate vector field then defines an approximate free boundary. Solving the associated boundary value problem \eqref{eq:ySecondOrderEquation}, \eqref{eq:yShockObliqueDerivativeU} and \eqref{eq:yWallVelocityCondition} yields the approximate solution. Finally, narrow-region elliptic estimates provide error bounds for this approximate solution.

\subsection{The quantity $\frac{y_u}{y_v}$ (or $\frac{y_v}{y_u}$) and the slope expressions}
In this part, we derive explicit expressions for quantities associated with $\qnt{\nabla y}^\circ(u, v)$. In the following, if two vectors $\vec{a}$ and $\vec{b}$ are parallel and share the same direction, i.e.
\begin{equation*}
\vec{a}//\vec{b},\quad\mbox{and}\quad\vec{a}\cdot\vec{b}>0,
\end{equation*}
then we introduce the notation $'///'$ as follows
\begin{equation*}
\vec{a}///\vec{b}.
\end{equation*}
For the solution $y(u, v)$ approaches $\Upsilon$ in second order, by the slope relation \eqref{eq:yWallPositionCondition}, we have the slope expression
\begin{equation}\label{eq:UpdateBoundaryCondition}
\begin{pmatrix}
y_u&y_v
\end{pmatrix}
\begin{pmatrix}
(c^2+v^2)u&-(c^2-v^2)v\\
(c^2-u^2)v&(c^2-v^2)u
\end{pmatrix}
\begin{pmatrix}
0&1\\
-1&0
\end{pmatrix}
///\begin{pmatrix}
\dif u&\dif v
\end{pmatrix}
///\begin{pmatrix}
\frac{\dif u}{\dif v}&1
\end{pmatrix}.
\end{equation}
Define a vector $s'$ as follows
\begin{equation}\label{eq:s}
\begin{split}
\begin{pmatrix}
\frac{\dif u}{\dif v}&1
\end{pmatrix}
///s' :&=
\begin{pmatrix}
y_u&y_v
\end{pmatrix}
\begin{pmatrix}
(c^2+v^2)u&-(c^2-v^2)v\\
(c^2-u^2)v&(c^2-v^2)u
\end{pmatrix}
\begin{pmatrix}
0&1\\
-1&0
\end{pmatrix}\\
&=\begin{pmatrix}
y_u&y_v
\end{pmatrix}
\begin{pmatrix}
(c^2-v^2)v&(c^2+v^2)u\\
-(c^2-v^2)u&(c^2-u^2)v
\end{pmatrix}.
\end{split}
\end{equation}
Consider inner product of the outer normal vector $\nabla G=(G_u, G_v)$($\approx\qnt{2u-\bar{q}, 2v}$) and $s'$, we have
\begin{equation}\label{eq:s'DG}
\begin{split}
s' \cdot \nabla_U G&=\begin{pmatrix}
y_u&y_v
\end{pmatrix}
\begin{pmatrix}
(c^2-v^2)v&(c^2+v^2)u\\
-(c^2-v^2)u&(c^2-u^2)v
\end{pmatrix}
\begin{pmatrix}
G_u\\G_v
\end{pmatrix}\\
&=\qnt{(c^2-v^2)vG_u+(c^2+v^2)uG_v}y_u+\qnt{(c^2-u^2)vG_v-(c^2-v^2)uG_u}y_v.
\end{split}
\end{equation}

Furthermore, on the shock boundary $\mathcal{S}$, we have the oblique derivative condition \eqref{eq:yShockObliqueDerivativeU}. It can be written in the matrix form as follows
\begin{equation}\label{eq:yShockObliqueDerivativeUM}
\begin{pmatrix}
G_u&G_v
\end{pmatrix}
\begin{pmatrix}
(c^2-v^2)[u]&(c^2-v^2)[v]\\
(2uv)[u]-(c^2-v^2)[v]&(c^2-u^2)[u]
\end{pmatrix}
\begin{pmatrix}
y_u\\
y_v
\end{pmatrix}=0.
\end{equation}
Therefore, with above constrain, we have
\begin{equation*}
\begin{pmatrix}
G_u&G_v
\end{pmatrix}
\begin{pmatrix}
(c^2-v^2)[u]&(c^2-v^2)[v]\\
(2uv)[u]-(c^2-v^2)[v]&(c^2-u^2)[u]
\end{pmatrix}
\begin{pmatrix}
0&1\\
-1&0
\end{pmatrix}
///\begin{pmatrix}
y_u&y_v
\end{pmatrix}.
\end{equation*}
Thus, if $\qnt{\nabla y}^\circ$ satisfies \eqref{eq:yShockObliqueDerivativeU}, then the slope expression \eqref{eq:UpdateBoundaryCondition} is determined as follows
\begin{equation*}
\begin{split}
&\begin{pmatrix}
G_u&G_v
\end{pmatrix}
\begin{pmatrix}
(c^2-v^2)[u]&(c^2-v^2)[v]\\
(2uv)[u]-(c^2-v^2)[v]&(c^2-u^2)[u]
\end{pmatrix}
\begin{pmatrix}
0&1\\
-1&0
\end{pmatrix}\\
&\cdot
\begin{pmatrix}
(c^2+v^2)u&-(c^2-v^2)v\\
(c^2-u^2)v&(c^2-v^2)u
\end{pmatrix}
\begin{pmatrix}
0&1\\
-1&0
\end{pmatrix}
///\begin{pmatrix}
\dif u&\dif v
\end{pmatrix}.
\end{split}
\end{equation*}
Therefore
\begin{equation*}-
\begin{pmatrix}
G_u&G_v
\end{pmatrix}
\begin{pmatrix}
(c^2-v^2)\qnt{v[v]+u[u]}&(c^2+v^2)u[v]-(c^2-u^2)v[u]\\
(c^2+u^2)v[u]-(c^2-v^2)u[v]&(c^2-u^2)\qnt{v[v]+u[u]}
\end{pmatrix}
///\begin{pmatrix}
\frac{\dif u}{\dif v}&1
\end{pmatrix}.
\end{equation*}
To summarise, we can introduce the vector $s_g'$, which is defined via \eqref{eq:yWallPositionCondition} under the constrain \eqref{eq:yShockObliqueDerivativeUM}, as follows
\begin{equation}\label{eq:sg}
\begin{split}
s_g' :&=-\begin{pmatrix}
G_u&G_v
\end{pmatrix}
\begin{pmatrix}
(c^2-v^2)\qnt{v[v]+u[u]}&(c^2+v^2)u[v]-(c^2-u^2)v[u]\\
(c^2+u^2)v[u]-(c^2-v^2)u[v]&(c^2-u^2)\qnt{v[v]+u[u]}
\end{pmatrix}\\
&=-\begin{pmatrix}
G_u&G_v
\end{pmatrix}\qnt{\qnt{v[v]+u[u]}\begin{pmatrix}
c^2-v^2&uv\\
uv&c^2-u^2
\end{pmatrix}+c^2\qnt{v[u]-u[v]}\begin{pmatrix}
0&-1\\
1&0
\end{pmatrix}}\\
&=_{\epsilon=0}\bar{q}vc^2\begin{pmatrix}
G_v&-G_u
\end{pmatrix}.
\end{split}
\end{equation}

Similarly, consider the inner product between $s_g'$ and $\nabla_U G$, we have
\begin{equation}\label{eq:sg'DGexpression}
\begin{split}
&\quad s_g' \cdot \nabla_U G\\
&=-\begin{pmatrix}
G_u&G_v
\end{pmatrix}
\qnt{\qnt{v[v]+u[u]}\begin{pmatrix}
c^2-v^2&uv\\
uv&c^2-u^2
\end{pmatrix}+c^2\qnt{v[u]-u[v]}\begin{pmatrix}
0&-1\\
1&0
\end{pmatrix}}
\begin{pmatrix}
G_u\\G_v
\end{pmatrix}\\
&=
-\underbrace{\qnt{u[u]+v[v]}}_{\frac{\epsilon}{\rho}(u-\underline{u})}\begin{pmatrix}
G_u&G_v
\end{pmatrix}
\begin{pmatrix}
c^2-v^2&uv\\
uv&c^2-u^2
\end{pmatrix}
\begin{pmatrix}
G_u\\G_v
\end{pmatrix}>0\\
&=
\frac{\underline{u}-u}{\rho}\begin{pmatrix}
G_u&G_v
\end{pmatrix}
\begin{pmatrix}
c^2-v^2&uv\\
uv&c^2-u^2
\end{pmatrix}
\begin{pmatrix}
G_u\\G_v
\end{pmatrix}\epsilon>0\\
&=_{\epsilon=0}0
\end{split}
\end{equation}
where on the shock polar $G(U, \epsilon)=0$, subsonic part, we have
\begin{equation*}
\begin{pmatrix}
G_u&G_v
\end{pmatrix}
\begin{pmatrix}
c^2-v^2&uv\\
uv&c^2-u^2
\end{pmatrix}
\begin{pmatrix}
G_u\\G_v
\end{pmatrix}>0
\end{equation*}
and $u[u]+v[v]=\frac{\epsilon}{\rho}(u-\underline{u})<0$.

\subsection{$(k, \varrho)$ coordinates}Since we consider the case when $\epsilon$ is sufficiently small, focusing on solutions near \eqref{eq:ShockpolarS}, we need to introduce coordinates defined locally around \eqref{eq:ShockpolarS} that are more manageable. In the shock polar expression \eqref{eq:ShockPolarGExpressione}, we introduce the parameter $\varrho$ by letting $\epsilon=\varrho$. It is defined as follows
\begin{equation*}
\underline{\rho}(\varrho)\underline{u}(\varrho)=\varrho.
\end{equation*}
Here, for $\gamma=2$, let $\underline{\rho}\underline{u}=\varrho<<1$, we have
\begin{equation*}
\underline{\rho}\underline{u}=\frac{1}{4}\qnt{\bar{q}-\underline{u}}\qnt{\bar{q}+\underline{u}}\underline{u}=\varrho.
\end{equation*}
Fortunately, in the special case where $\gamma= 2$, owing to
\begin{equation*}
\partial_{\underline{u}}\qnt{\frac{1}{4}\qnt{\bar{q}-\underline{u}}\qnt{\bar{q}+\underline{u}}\underline{u}}|_{\underline{u}=\bar{q}}=-\frac{\bar{q}^2}{2},
\end{equation*}
by implicit function theorem, $\underline{u}$ and $\underline{\rho}$ are both $C^\infty$ functions of $\varrho$ around $\underline{u}=\bar{q}$.

The states $(u, v)$ along the shock should satisfy the shock polar
\begin{equation}\label{eq:ShockPolarGExpressionvarrho}
G(u, v, \varrho)=0,
\end{equation}
where $G$ is defined via \eqref{eq:ShockPolarGExpressione}. Thus, we can solve $\varrho$ uniquely from $(u, v)$ for $\varrho<<1$. To see this, we differentiate above with respect to $\varrho$ to have
\begin{equation}\label{eq:Ge}
G_\varrho=-\frac{\underline{u}-u}{\rho}-\underline{u}'\qnt{\frac{\varrho}{\rho}-u}.
\end{equation}
By entropy condition, we have
\begin{equation*}
\underline{u}-u>0.
\end{equation*}
By \eqref{eq:ShockPolarGExpressionvarrho}, we have
\begin{equation*}
u-\frac{\varrho}{\rho}=-\frac{v^2}{u-\underline{u}}>0.
\end{equation*}
It can be directly verified that
\begin{equation*}
\underline{u}'<0,
\end{equation*}
for $\varrho<<1$ and $\bar{q}-\underline{u}<<1$. Above three indicate $G_\varrho<0$, so $\varrho$ can be solved as a function of $(u, v)$ around \eqref{eq:ShockpolarS}.

Now $\varrho(u, v)$ is the implicit function defined by \eqref{eq:ShockPolarGExpressionvarrho} and we have
\begin{equation*}
\qnt{\varrho_u, \varrho_v}=-\frac{1}{G_\varrho}\qnt{G_u, G_v}.
\end{equation*}
Particular
\begin{equation}\label{eq:Ge0}
G_\varrho\Big|_{\varrho(u, v)=0}=\frac{4\bar{q}+2u}{\bar{q}^2}>0.
\end{equation}
\begin{lstlisting}
RestrictS[ImplicitD[G[u, v, U, e], InF[U, e] == 0, U, e]]
\end{lstlisting}

Next, the parameter $k$ is defined by
\begin{equation*}
k=\frac{u}{v}.
\end{equation*}
Thus
\begin{equation*}
\qnt{k_u, k_v}=\qnt{\frac{1}{v}, -\frac{u}{v^2}}.
\end{equation*}

To conclude, we have 
\begin{prop}
The coordinates $(k, \varrho)$ can be defined around \eqref{eq:ShockpolarS} in $(u, v)$ plane. The Jacobi matrix between $(k, \varrho)$ and $(u, v)$ can be calculated as follows
\begin{equation*}
\begin{pmatrix}
k_u&k_v\\
\varrho_u&\varrho_v
\end{pmatrix}
=
\begin{pmatrix}
\frac{1}{v}&-\frac{u}{v^2}\\
-\frac{G_u}{G_\varrho}&-\frac{G_v}{G_\varrho}
\end{pmatrix}
=_{\varrho=0}
\begin{pmatrix}
\frac{1}{v}&-\frac{u}{v^2}\\
\frac{\bar{q}^2 (\bar{q} - 2 u)}{2 (2 \bar{q} + u)}&-\frac{\bar{q}^2 v}{2\bar{q} + u}
\end{pmatrix},
\end{equation*}
and
\begin{equation*}
\begin{pmatrix}
u_k&u_\varrho\\
v_k&v_\varrho
\end{pmatrix}|_{\varrho=0}
=\begin{pmatrix}
k_u&k_v\\
\varrho_u&\varrho_v
\end{pmatrix}^{-1}|_{\varrho=0}=\frac{2(\bar{q}-u)(2\bar{q}+u)}{\bar{q}^3}
\begin{pmatrix}
\frac{\bar{q}^2 v}{2\bar{q} + u}&-\frac{u}{v^2}\\
\frac{\bar{q}^2 (\bar{q} - 2 u)}{2 (2 \bar{q} + u)}&-\frac{1}{v}
\end{pmatrix}.
\end{equation*}
Especially
\begin{equation}\label{eq:Partiale}
\begin{split}
\partial_\varrho&=u_\varrho\partial_u+v_\varrho\partial_v=_{\varrho=0}-\frac{2(\bar{q}-u)(2\bar{q}+u)}{\bar{q}^3v^2}\qnt{u\partial_u+v\partial_v}\\
&=_{\varrho=0}-\frac{2(2\bar{q}+u)}{\bar{q}^3u}\qnt{u\partial_u+v\partial_v}.
\end{split}
\end{equation}
\end{prop}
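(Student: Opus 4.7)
The plan is to split the proposition into three tasks---smoothness and nondegeneracy of the change of coordinates $(u,v)\mapsto(k,\varrho)$ on a neighbourhood of $\mathcal{S}$, derivation of the Jacobian matrix together with its inverse, and reading off the directional derivative \eqref{eq:Partiale}---and to treat them consecutively. The coordinate $k=u/v$ is manifestly $C^\infty$ wherever $v>0$, which is the case on the subsonic arc of interest. For $\varrho$, I would invoke the implicit function theorem on $G(u,v,\varrho)=0$: equations \eqref{eq:Ge}--\eqref{eq:Ge0} already ensure that $G_\varrho$ is nonzero in a tubular neighbourhood of $\mathcal{S}$, so $\varrho\in C^\infty$ as a function of $(u,v)$ locally.

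The Jacobian entries then fall out with very little work. For $k$ they are trivial; for $\varrho$, implicit differentiation of $G(u,v,\varrho(u,v))=0$ yields $\varrho_u=-G_u/G_\varrho$ and $\varrho_v=-G_v/G_\varrho$. Specialising to $\varrho=0$ along $\mathcal{S}$, the expression \eqref{eq:ShockPolarGExpressione} reduces to $G=u^2-\bar{q}u+v^2$, giving $G_u=2u-\bar{q}$ and $G_v=2v$; combined with $G_\varrho|_{\varrho=0}=(4\bar{q}+2u)/\bar{q}^2$ from \eqref{eq:Ge0}, the first quoted matrix emerges immediately. Inverting the $2\times2$ Jacobian is then mechanical; in computing $\det=k_u\varrho_v-k_v\varrho_u$ I would use the shock polar identity $v^2=u(\bar{q}-u)$ to collapse $-2v^2+u\bar{q}-2u^2$ down to $-\bar{q}u$, obtaining $\det=-\bar{q}^3 u/(2v^2(2\bar{q}+u))$. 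This simplification, followed by one further substitution $v^2/u=\bar{q}-u$, folds the cofactors into the announced common factor $2(\bar{q}-u)(2\bar{q}+u)/\bar{q}^3$ and recovers the stated inverse.

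For \eqref{eq:Partiale}, I simply read $\partial_\varrho=u_\varrho\partial_u+v_\varrho\partial_v$ from the second column of the inverse Jacobian; the common factor $-2(\bar{q}-u)(2\bar{q}+u)/(\bar{q}^3 v^2)$ extracts cleanly from the expression $u\partial_u+v\partial_v$, and the alternative form follows by one more application of $v^2=u(\bar{q}-u)$. I do not foresee any genuine obstacle here: the entire proposition is bookkeeping around the implicit function theorem and the shock-polar identity, and the $\gamma=2$ polynomial structure keeps all eliminations elementary. The only subtlety worth flagging is that one must restrict to the subsonic portion of $\mathcal{S}$, where $v>0$ and---thanks to the wall-slope bound $b'\geq\flat>0$---the component $u$ is uniformly bounded away from $0$; this simultaneously keeps $G_\varrho$ and $\det$ away from zero and retroactively justifies the claim that $(k,\varrho)$ actually forms a local coordinate system near $\mathcal{S}$.
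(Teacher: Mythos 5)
Your proposal is correct and follows essentially the same route as the paper: the proposition is stated there as a summary of the immediately preceding discussion, which likewise obtains $\varrho$ as a function of $(u,v)$ from the implicit function theorem applied to $G(u,v,\varrho)=0$ (using the nonvanishing of $G_\varrho$ near $\mathcal{S}$), computes $(\varrho_u,\varrho_v)=-(G_u,G_v)/G_\varrho$ together with the trivial derivatives of $k=u/v$, and then inverts the $2\times 2$ matrix using the shock-polar substitution $v^2=u(\bar{q}-u)$. Your determinant $-\bar{q}^3u/(2v^2(2\bar{q}+u))$ and the resulting inverse and expression \eqref{eq:Partiale} check out against the stated formulas, so there is nothing to add.
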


\subsection{Calculations of the slope free boundary value condition} We consider \eqref{eq:yWallPositionCondition} under $(k, \varrho)$ coordinates. It holds the differential relation
\begin{equation*}
\qnt{\dif k, \dif \varrho}=\qnt{\dif u, \dif v}
\begin{pmatrix}
k_u&\varrho_u\\
k_v&\varrho_v
\end{pmatrix}=\qnt{\dif u, \dif v}
\begin{pmatrix}
\frac{1}{v}&-\frac{G_u}{G_\varrho}\\
-\frac{u}{v^2}&-\frac{G_v}{G_\varrho}
\end{pmatrix}.
\end{equation*}
\begin{prop}
Owing to $s'///\qnt{\dif u, \dif v}$, by above, the free boundary condition \eqref{eq:yWallPositionCondition} can be rewritten under $(k, \varrho)$ coordinates as follows
\begin{equation}\label{eq:Fexpression}
\frac{\dif \varrho}{\dif k}=\frac{\varrho_u\dif u+\varrho_v\dif v}{k_u\dif u+k_v\dif v}=F(\frac{y_u}{y_v}(\varrho, k), \varrho, k):=-\frac{s' \cdot \nabla_U G}{G_\varrho \qnt{s' \cdot \nabla_U k}}.
\end{equation}
\end{prop}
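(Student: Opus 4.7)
The plan is a direct chain-rule computation that exploits the parallelism $s'///\qnt{\dif u,\dif v}$ already recorded in \eqref{eq:UpdateBoundaryCondition}--\eqref{eq:s}. First I would parametrize the wall $\mathcal{W}$ by, say, $v$ and regard $\qnt{\dif u,\dif v}$ as its tangent increment; the stated parallelism then furnishes a positive scalar $\lambda$ with $\qnt{\dif u,\dif v}=\lambda\,s'$.

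Next I would substitute this into the differential relation written just above \eqref{eq:Fexpression}. The top row gives
\begin{equation*}
\dif k=k_u\,\dif u+k_v\,\dif v=\lambda\,\qnt{s'\cdot\nabla_U k}.
\end{equation*}
For the second row I would use the implicit function identity from the previous proposition, $\qnt{\varrho_u,\varrho_v}=-\qnt{G_u,G_v}/G_\varrho$, to obtain
\begin{equation*}
\dif\varrho=\varrho_u\,\dif u+\varrho_v\,\dif v=\lambda\,\qnt{s'\cdot\nabla_U\varrho}=-\frac{\lambda}{G_\varrho}\,\qnt{s'\cdot\nabla_U G}.
\end{equation*}
Dividing the two displays, the unknown scalar $\lambda$ cancels, yielding precisely the claimed expression in \eqref{eq:Fexpression}.

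To complete the argument I would verify two simple points. First, that the right-hand side really depends on $\qnt{y_u,y_v}$ only through the ratio $y_u/y_v$: from the definition \eqref{eq:s}, $s'$ is linear in $\qnt{y_u,y_v}$, so a common scalar factor cancels between $s'\cdot\nabla_U G$ and $s'\cdot\nabla_U k$; hence the function $F$ is well-defined on the variables $\qnt{y_u/y_v,\varrho,k}$. Second, that the denominator $G_\varrho\,\qnt{s'\cdot\nabla_U k}$ does not vanish in the regime of interest: $G_\varrho\neq 0$ at $\varrho=0$ by \eqref{eq:Ge0}, and nonvanishing of $s'\cdot\nabla_U k$ will follow by continuity from its value on the limit shock $\mathcal{S}$, where the structure of the coefficient matrix in \eqref{eq:s} combined with $\nabla_U k=\qnt{1/v,-u/v^2}$ in the subsonic strip $k\leq\sqrt{2}/2$ makes the inner product nonzero for $\nabla y$ close to $\nabla\Upsilon$.

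There is no real analytic obstacle here, as everything reduces to linear algebra and the chain rule; the only bookkeeping issue is correctly tracking the sign in $\nabla_U\varrho=-\nabla_U G/G_\varrho$ and confirming the homogeneity of $s'$ in $\qnt{y_u,y_v}$, both of which are built into the definitions. Thus the proposition is essentially a translation of the slope condition \eqref{eq:yWallPositionCondition} from the $(u,v)$-frame into the $(k,\varrho)$-frame.
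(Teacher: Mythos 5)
Your proposal is correct and follows essentially the same route the paper intends: the proposition is an immediate consequence of the differential relation $\qnt{\dif k,\dif\varrho}=\qnt{\dif u,\dif v}\begin{pmatrix}k_u&\varrho_u\\k_v&\varrho_v\end{pmatrix}$ together with $s'///\qnt{\dif u,\dif v}$ and $\qnt{\varrho_u,\varrho_v}=-\qnt{G_u,G_v}/G_\varrho$, with the scalar $\lambda$ cancelling in the quotient exactly as you write. Your two supplementary checks (homogeneity of $s'$ in $\nabla y$, so that $F$ depends only on $y_u/y_v$, and nonvanishing of $G_\varrho\qnt{s'\cdot\nabla_U k}$, which the paper confirms later in \eqref{eq:Ge0} and \eqref{eq:sk}) are consistent with the paper and make the argument complete.
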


For the incoming flow with mass flux $\epsilon$, on the shock polar $\varrho=\epsilon$, by \eqref{eq:yShockObliqueDerivativeU}, we have
\begin{equation}\label{eq:s=sg}
s'///s_g'.
\end{equation}
Thus, 
\begin{prop}
For the function $F(\frac{y_u}{y_v}(k, \varrho; \epsilon), k, \varrho)$, we have
\begin{equation}\label{eq:FexpressionG}
F(\frac{y_u}{y_v}(k, \epsilon; \epsilon), k, \epsilon)=-\frac{s' \cdot \nabla_U G}{G_\varrho \qnt{s' \cdot \nabla_U k}}\Big|_{\varrho=\epsilon}=-\frac{s_g' \cdot \nabla_U G}{G_\varrho \qnt{s_g' \cdot \nabla_U k}}\Big|_{\varrho=\epsilon},
\end{equation}
provided \eqref{eq:yShockObliqueDerivativeU}.
\end{prop}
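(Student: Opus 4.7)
The plan is to exploit the parallelism relation \eqref{eq:s=sg}: on the shock polar $\varrho=\epsilon$, the oblique derivative condition \eqref{eq:yShockObliqueDerivativeU} pins down the direction of $s'$ completely, independent of the magnitudes of $y_u$ and $y_v$ individually. Since the right-hand side of \eqref{eq:Fexpression} is homogeneous of degree zero in $s'$ (both numerator and denominator are linear in $s'$), only this direction enters $F$.

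First I would record the pointwise identity $s'=\lambda\, s_g'$ along $\varrho=\epsilon$, where $\lambda=\lambda(k,\epsilon)>0$. The existence of such a scalar $\lambda$ is exactly the content of \eqref{eq:s=sg}, which was obtained in the preceding subsection by taking the definition \eqref{eq:s} of $s'$ and substituting the constraint \eqref{eq:yShockObliqueDerivativeUM} to eliminate the dependence on $\qnt{y_u, y_v}$; positivity of $\lambda$ follows from the sign computation carried out immediately after \eqref{eq:sg'DGexpression}, where both $s'\cdot\nabla_U G$ and $s_g'\cdot\nabla_U G$ are shown to be positive on the subsonic branch of the shock polar. Next I would substitute $s'=\lambda\, s_g'$ into \eqref{eq:Fexpression}; the factor $\lambda$ appears in both the numerator $s'\cdot\nabla_U G$ and the factor $s'\cdot\nabla_U k$ of the denominator, and hence cancels:
\begin{equation*}
F\qnt{\tfrac{y_u}{y_v}(k,\epsilon;\epsilon),\,k,\,\epsilon}
=-\frac{\lambda\, s_g'\cdot\nabla_U G}{G_\varrho\qnt{\lambda\, s_g'\cdot\nabla_U k}}\Big|_{\varrho=\epsilon}
=-\frac{s_g'\cdot\nabla_U G}{G_\varrho\qnt{s_g'\cdot\nabla_U k}}\Big|_{\varrho=\epsilon},
\end{equation*}
which is the asserted \eqref{eq:FexpressionG}.

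The only point that needs genuine verification is that the denominator $s_g'\cdot\nabla_U k$ is nonvanishing along the subsonic arc of \eqref{eq:ShockpolarS} for $k$ in the slope range \eqref{eq:SupersonicWallCondition}, so that the quotient is well-defined and the cancellation of $\lambda$ is legitimate. For $\epsilon$ small this can be read off directly from the limiting expression $s_g'=_{\epsilon=0}\bar{q}vc^2\begin{pmatrix}G_v&-G_u\end{pmatrix}$ at the end of \eqref{eq:sg}, combined with $\nabla_U k=\qnt{1/v,\,-u/v^2}$ and the explicit forms of $G_u,G_v$ on \eqref{eq:ShockpolarS}. There is no real obstacle here: the proposition is essentially a bookkeeping observation recording that the slope vector field defined by \eqref{eq:yWallPositionCondition} depends on $\qnt{\nabla y}^\circ$ only, and that this projective direction is completely fixed by \eqref{eq:yShockObliqueDerivativeU}.
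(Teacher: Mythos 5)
Your proposal is correct and follows essentially the same route as the paper: the paper derives \eqref{eq:FexpressionG} directly from the parallelism \eqref{eq:s=sg} (itself a consequence of \eqref{eq:yShockObliqueDerivativeU} on $\varrho=\epsilon$), relying implicitly on the degree-zero homogeneity of the quotient in $s'$ that you spell out. Your additional check that $s_g'\cdot\nabla_U k$ is nonvanishing is consistent with the paper's later computation \eqref{eq:sgk}.
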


To end this subsection, we present the following theorem about three calculations
\begin{thm}
\begin{equation}\label{eq:DeF'DGexpression}
\begin{split}
\partial_\varrho F(\frac{y_u}{y_v}\qnt{k, \varrho; \epsilon}, k, \varrho)\Big|_{\varrho=\epsilon=0; \nabla y=\nabla\Upsilon; \nabla^2 y=\nabla^2\Upsilon}
&=-\frac{\partial_\varrho\qnt{s' \cdot \nabla_U G}}{G_\varrho \qnt{s' \cdot \nabla_U k}}\Big|_{\varrho=\epsilon=0; \nabla y=\nabla\Upsilon; \nabla^2 y=\nabla^2\Upsilon}\\
&=-\frac{B''}{B'}-\frac{(2 \bar{q}^2 + 7 \bar{q} u - 12 u^2) v}{u (2 \bar{q} + u) (\bar{q} - 2 u)}<0\\
&=_{\sharp-k<\delta}-\frac{\alpha+1}{\sharp-k}-\frac{(2 \bar{q}^2 + 7 \bar{q} u - 12 u^2) v}{u (2 \bar{q} + u) (\bar{q} - 2 u)}<0,
\end{split}
\end{equation}
\begin{equation}\label{eq:DeFg'DGexpression}
\begin{split}
\partial_\varrho\frac{s_g' \cdot \nabla_U G}{G_\varrho \qnt{s_g' \cdot \nabla_U k}}\Big|_{\varrho=\epsilon=0}&=\frac{\partial_\varrho\qnt{s_g' \cdot \nabla_U G}}{G_\varrho \qnt{s_g' \cdot \nabla_U k}}\Big|_{\varrho=\epsilon=0}\\
&=\frac{2 (\bar{q} - 2 u) v}{u (2\bar{q} + u)}>0,
\end{split}
\end{equation}
and
\begin{equation}\label{eq:DeF-Fg'DGexpression}
\begin{split}
&\partial_\varrho F(\frac{y_u}{y_v}\qnt{k, \varrho; \epsilon}, k, \varrho)\Big|_{\varrho=\epsilon=0; \nabla y=\nabla\Upsilon; \nabla^2 y=\nabla^2\Upsilon}+ \partial_\varrho\frac{s_g' \cdot \nabla_U G}{G_\varrho \qnt{s_g' \cdot \nabla_U k}}\Big|_{\varrho=\epsilon=0}\\
&\quad=-\frac{\partial_\varrho\qnt{s' \cdot \nabla_U G}}{G_\varrho \qnt{s' \cdot \nabla_U k}}\Big|_{\varrho=\epsilon=0; \nabla y=\nabla\Upsilon; \nabla^2 y=\nabla^2\Upsilon}+\frac{\partial_\varrho\qnt{s_g' \cdot \nabla_U G}}{G_\varrho \qnt{s_g' \cdot \nabla_U k}}\Big|_{\varrho=0}\\
&\quad=-\frac{B''}{B'}-\frac{5 (3 \bar{q} - 4 u) v}{(\bar{q} - 2 u) (2 \bar{q} + u)}<0,
\end{split}
\end{equation}
provided that $(u, v)$ on \eqref{eq:ShockpolarS} satisfies
\begin{equation*}
0<\flat\leq \frac{u}{v}<\sharp<\sqrt{\frac{\gamma-1}{2}}=\frac{\sqrt{2}}{2},
\end{equation*}
or equivalently
\begin{equation*}
0<\flat_u\leq u<\sharp_u<\frac{\bar{q}}{3},\quad\mbox{and}\quad0<\flat_v\leq v<\sharp_v<\frac{\sqrt{2}\bar{q}}{3}.
\end{equation*}
\end{thm}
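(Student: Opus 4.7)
The plan is to exploit a key simultaneous vanishing at $\varrho=0$ that collapses both quotient-rule derivatives into derivatives of numerators only. From the closed form \eqref{eq:sg}, $s_g'|_{\varrho=0}=\bar{q}vc^2(G_v,-G_u)$, which is orthogonal to $\nabla_U G$; hence $s_g'\cdot \nabla_U G$ vanishes on the curve $\varrho=0$. Moreover, \eqref{eq:s=sg} together with the oblique derivative identity \eqref{eq:ShockPolarObliqueConditionS} satisfied by $\Upsilon$ forces $s'|_{\nabla y=\nabla\Upsilon}///s_g'$ on the same curve, so $s'\cdot \nabla_U G$ also vanishes there when $\nabla y=\nabla\Upsilon$. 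The denominators $G_\varrho(s'\cdot \nabla_U k)$ and $G_\varrho(s_g'\cdot \nabla_U k)$ are non-zero at $\varrho=0$ by \eqref{eq:Ge0} together with $\nabla_U k=(1/v,-u/v^2)$. Consequently the quotient rule degenerates into $\partial_\varrho A / B|_{\varrho=0}$, which is precisely the first equalities asserted in \eqref{eq:DeF'DGexpression} and \eqref{eq:DeFg'DGexpression}.

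For \eqref{eq:DeFg'DGexpression} the subsequent computation is purely polynomial. Using the realization of $\partial_\varrho$ at $\varrho=0$ as the directional derivative $-\frac{2(2\bar{q}+u)}{\bar{q}^3 u}(u\partial_u+v\partial_v)$ from \eqref{eq:Partiale}, one computes $\partial_\varrho(s_g'\cdot\nabla_U G)|_{\varrho=0}$ in terms of $G_u,G_v,G_\varrho,c^2,\rho$ only, with no appearance of $y$. Dividing by $G_\varrho(s_g'\cdot\nabla_U k)|_{\varrho=0}$ and applying the substitution rule $v^2\to u(\bar{q}-u)$ along \eqref{eq:ShockpolarS} yields the stated $\frac{2(\bar{q}-2u)v}{u(2\bar{q}+u)}$.

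For \eqref{eq:DeF'DGexpression} the additional ingredient is that $s'$ is linear in $(y_u,y_v)$ by \eqref{eq:s}: applying $\partial_\varrho$ produces terms involving $(y_{uu},y_{uv},y_{vv})$ through the chain rules $\partial_\varrho y_u=u_\varrho y_{uu}+v_\varrho y_{uv}$ and $\partial_\varrho y_v=u_\varrho y_{uv}+v_\varrho y_{vv}$, and the hypothesis $\nabla^2 y=\nabla^2\Upsilon$ permits substitution of the closed-form expressions for $\Upsilon_{uu},\Upsilon_{uv},\Upsilon_{vv}$ obtained in Section~3. Those expressions are affine in $B'$ and $B''$, while the denominator $G_\varrho(s'\cdot\nabla_U k)|_{\varrho=0,\nabla y=\nabla\Upsilon}$ is linear in $B'$ via $(\Upsilon_u,\Upsilon_v)$; the division therefore produces exactly one $-B''/B'$ term together with a rational function in $u,v,\bar{q}$ which, after restriction to $v^2=u(\bar{q}-u)$ and inclusion of the overall minus sign from \eqref{eq:Fexpression}, simplifies to $-\frac{(2\bar{q}^2+7\bar{q}u-12u^2)v}{u(2\bar{q}+u)(\bar{q}-2u)}$.

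Identity \eqref{eq:DeF-Fg'DGexpression} is then obtained by summing \eqref{eq:DeF'DGexpression} and \eqref{eq:DeFg'DGexpression}: the two rational fractions combine to $-\frac{5(3\bar{q}-4u)v}{(\bar{q}-2u)(2\bar{q}+u)}$. The sign statements follow from the elliptic range $0<u<\bar{q}/3$, where $\bar{q}-2u>0$, $2\bar{q}+u>0$, $3\bar{q}-4u>0$, and $2\bar{q}^2+7\bar{q}u-12u^2>0$, combined with $B'>0$, $B''>0$ from \eqref{eq:Bassumptions}. The local profile $B(k)=(\sharp-k)^{-\alpha}$ for $\sharp-k<\delta$ yields $B''/B'=(\alpha+1)/(\sharp-k)$, producing the asymptotic form. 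The main obstacle is the size of the algebraic computation once everything is expanded, since the expressions for $\Upsilon_{uu},\Upsilon_{uv},\Upsilon_{vv}$ are lengthy and must be substituted simultaneously into $\partial_\varrho(s'\cdot\nabla_U G)$ and its denominator; the excerpt indicates that this is handled symbolically in Mathematica under the \texttt{RestrictS} substitution $v^2\to u(\bar{q}-u)$. Conceptually, however, the only delicate point is recognising the two simultaneous vanishings at $\varrho=0$; without them, the full quotient rule would produce cross-terms coupling $B$, $B'$, $B''$ that would not collapse into the clean rational expressions displayed in the theorem.
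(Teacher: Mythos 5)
Your proposal follows essentially the same route as the paper's proof: both hinge on the simultaneous vanishing of $s'\cdot\nabla_U G$ and $s_g'\cdot\nabla_U G$ at $\varrho=\epsilon=0$ (the paper's \eqref{eq:sg0}), which collapses the quotient rule to a derivative of the numerator alone, followed by evaluation of the denominators as in \eqref{eq:sgk} and \eqref{eq:sk} and of the $\varrho$-derivatives of the numerators via \eqref{eq:Partiale} together with the closed forms of $\nabla\Upsilon$ and $\nabla^2\Upsilon$, with the remaining algebra carried out symbolically under the substitution $v^2\to u(\bar q-u)$. The only cosmetic difference is that for $\partial_\varrho\qnt{s_g'\cdot\nabla_U G}$ the paper reads the derivative off the explicit factor $u[u]+v[v]=\frac{\varrho}{\rho}(u-\underline{u})$ in \eqref{eq:sg'DGexpression} rather than applying the directional derivative, which does not change the argument.
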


\begin{proof}
Since \eqref{eq:s=sg}, in view of the expression \eqref{eq:sg'DGexpression}, we have
\begin{equation}\label{eq:sg0}
\begin{cases}
s' \cdot \nabla_U G>0,~s_g' \cdot \nabla_U G>0\quad\mbox{for}\quad 0<\varrho=\epsilon;\\
s' \cdot \nabla_U G|_{\varrho=\epsilon=0}=s_g' \cdot \nabla_U G|_{\varrho=\epsilon=0}=0.
\end{cases}
\end{equation}
Then
\begin{equation}\label{eq:sgk}
\begin{split}
&\qnt{s_g' \cdot \nabla_U k}\\
&\quad=\frac{1}{v^2}\begin{pmatrix}
G_u&G_v
\end{pmatrix}
\qnt{\qnt{v[v]+u[u]}\begin{pmatrix}
c^2-v^2&uv\\
uv&c^2-u^2
\end{pmatrix}+c^2\qnt{v[u]-u[v]}\begin{pmatrix}
0&-1\\
1&0
\end{pmatrix}}
\begin{pmatrix}
-v\\
u
\end{pmatrix}\\
&\quad=_{\varrho=\epsilon=0}\frac{1}{v^2}\begin{pmatrix}
2u-\bar{q}&2v
\end{pmatrix}
\begin{pmatrix}
0&vc^2\bar{q}\\
-vc^2\bar{q}&0
\end{pmatrix}
\begin{pmatrix}
-v\\
u
\end{pmatrix}=\frac{uc^2\bar{q}^2}{v}=\frac{(\bar{q}-u)u\bar{q}^3}{2v},
\end{split}
\end{equation}
and
\begin{equation}\label{eq:sk}
\begin{split}
\qnt{s' \cdot \nabla_U k}&=\begin{pmatrix}
y_u&y_v
\end{pmatrix}
\begin{pmatrix}
(c^2-v^2)v&(c^2+v^2)u\\
-(c^2-v^2)u&(c^2-u^2)v
\end{pmatrix}
\begin{pmatrix}
k_u\\
k_v
\end{pmatrix}\\
&=\frac{1}{v^2}\begin{pmatrix}
y_u&y_v
\end{pmatrix}
\begin{pmatrix}
(c^2-v^2)v&(c^2+v^2)u\\
-(c^2-v^2)u&(c^2-u^2)v
\end{pmatrix}
\begin{pmatrix}
v\\
-u
\end{pmatrix}\\
&=\frac{1}{v^2}\begin{pmatrix}
y_u&y_v
\end{pmatrix}
\begin{pmatrix}
(c^2-v^2)v^2-(c^2+v^2)u^2\\
-(c^2-v^2)uv-(c^2-u^2)uv
\end{pmatrix}\\
&=\frac{1}{v^2}\qnt{\qnt{(c^2-v^2)v^2-(c^2+v^2)u^2}y_u-\qnt{2c^2-u^2-v^2}uvy_v}\\
&=_{\varrho=\epsilon=0; \nabla y=\nabla \Upsilon}\frac{B'\bar{q}^2u }{2 (\bar{q} - u) v}.
\end{split}
\end{equation}
\begin{lstlisting}
Simplify[
RestrictS[((CC[u, v] - v^2) v^2 - (CC[u, v] + v^2) u^2)/v^2] y0u - 
RestrictS[((2 CC[u, v] - u^2 - v^2) u  v)/v^2] y0v]
\end{lstlisting}
Next, for \eqref{eq:FexpressionG}, we have
\begin{equation}\label{eq:FexpressionG>}
F(\frac{y_u}{y_v}(k, \epsilon; \epsilon), k, \epsilon)=-\frac{s' \cdot \nabla_U G}{G_\varrho \qnt{s' \cdot \nabla_U k}}\Big|_{\varrho=\epsilon}=-\frac{s_g' \cdot \nabla_U G}{G_\varrho \qnt{s_g' \cdot \nabla_U k}}\Big|_{\varrho=\epsilon}=-g(k, \epsilon)<0,
\end{equation}
provided $\epsilon<<1$ and $\nabla y\approx\nabla\Upsilon$. Particular, for $\varrho=\epsilon=0$, we have
\begin{equation}\label{eq:FexpressionG0}
F(\frac{y_u}{y_v}(k, 0; 0), k, 0)=-\frac{s' \cdot \nabla_U G}{G_\varrho \qnt{s' \cdot \nabla_U k}}\Big|_{\varrho=\epsilon=0}=-\frac{s_g' \cdot \nabla_U G}{G_\varrho \qnt{s_g' \cdot \nabla_U k}}\Big|_{\varrho=\epsilon=0}=-g(k, 0)=0.
\end{equation}
By \eqref{eq:sg'DGexpression}
\begin{equation}\label{eq:Desg'DGexpression}
\begin{split}
\partial_\varrho\qnt{s_g' \cdot \nabla_U G}|_{\varrho=\epsilon=0}&=
\frac{\bar{q}-u}{\rho}\begin{pmatrix}
G_u&G_v
\end{pmatrix}
\begin{pmatrix}
c^2-v^2&uv\\
uv&c^2-u^2
\end{pmatrix}
\begin{pmatrix}
G_u\\G_v
\end{pmatrix}|_{\varrho=\epsilon=0}\\
&=2\bar{q}(\bar{q}-2u) (\bar{q}-u).
\end{split}
\end{equation}
\begin{lstlisting}
RestrictS[(Q - u)/
Rho[u, v] (Gu[u, v, U, e]^2 (CC[u, v] - v^2) + 
2 u  v  Gu[u, v, U, e] Gv[u, v, U, e] + 
Gv[u, v, U, e]^2 (CC[u, v] - u^2))]
\end{lstlisting}
Thus, by \eqref{eq:sg0}, we have
\begin{equation*}
\begin{split}
\partial_\varrho F(\frac{y_u}{y_v}\qnt{k, \varrho; \epsilon}, k, \varrho)\Big|_{\varrho=\epsilon=0; \nabla y=\nabla\Upsilon; \nabla^2 y=\nabla^2\Upsilon}&=\partial_\varrho\qnt{-\frac{s' \cdot \nabla_U G}{G_\varrho \qnt{s' \cdot \nabla_U k}}}\Big|_{\varrho=\epsilon=0; \nabla y=\nabla\Upsilon; \nabla^2 y=\nabla^2\Upsilon}\\
&=-\frac{\partial_\varrho\qnt{s' \cdot \nabla_U G}}{G_\varrho \qnt{s' \cdot \nabla_U k}}\Big|_{\varrho=\epsilon=0; \nabla y=\nabla\Upsilon; \nabla^2 y=\nabla^2\Upsilon},
\end{split}
\end{equation*}
and \eqref{eq:Desg'DGexpression}, \eqref{eq:Ge0} and \eqref{eq:sgk} lead to
\begin{equation*}
\begin{split}
\partial_\varrho\frac{s_g' \cdot \nabla_U G}{G_\varrho \qnt{s_g' \cdot \nabla_U k}}\Big|_{\varrho=\epsilon=0}&=\frac{\partial_\varrho\qnt{s_g' \cdot \nabla_U G}}{G_\varrho \qnt{s_g' \cdot \nabla_U k}}\Big|_{\varrho=\epsilon=0}\\
&=\frac{2 (\bar{q} - 2 u) v}{u (2\bar{q} + u)}.
\end{split}
\end{equation*}
\begin{lstlisting}
2  Q  (Q - 2  u)  (Q - u)/((2 (2 Q + u))/Q^2 (Q - u) u  Q^3/(2 v))
\end{lstlisting}

For the most complicate part, we calculate $\partial_\varrho\qnt{s' \cdot \nabla_U G}$. By the expression \eqref{eq:s'DG}, we have
\begin{equation}\label{eq:Des'DGexpression}
\begin{split}
&\partial_\varrho\qnt{s' \cdot \nabla_U G}|_{\varrho=\epsilon=0; \nabla y=\nabla\Upsilon; \nabla^2 y=\nabla^2\Upsilon}\\
&\quad=-\frac{2(2\bar{q}+u)}{\bar{q}^3u}\qnt{u\partial_u+v\partial_v}\qnt{s' \cdot \nabla_U G}|_{\varrho=\epsilon=0; \nabla y=\nabla\Upsilon; \nabla^2 y=\nabla^2\Upsilon}\\
&\quad=B''(k)\frac{\qnt{2\bar{q}+u}v}{\qnt{\bar{q}-u}^2}+B'(k)\frac{2\bar{q}^2 + 7 \bar{q} u - 12 u^2}{(\bar{q} - 2 u) (\bar{q} - u)}.
\end{split}
\end{equation}
\begin{lstlisting}
RestrictS[-2 (2 Q + 
u)/(Q^3 u) (u (ImplicitD[
v  (-v^2 + CC[u, v]) Gu[u, v, U, e] + 
u  (v^2 + CC[u, v]) Gv[u, v, U, e], {G[u, v, U, e] == 0, 
InF[U, e] == 0}, {U, e}, u] y0u + 
ImplicitD[
v  (-u^2 + CC[u, v]) Gv[u, v, U, e] - 
u  (-v^2 + CC[u, v]) Gu[u, v, U, e], {G[u, v, U, e] == 0, 
InF[U, e] == 0}, {U, e}, u] y0v +
(v  (-v^2 + CC[u, v]) Gu[u, v, U, e] + 
u  (v^2 + CC[u, v]) Gv[u, v, U, 
e]) y0uu + (v  (-u^2 + CC[u, v]) Gv[u, v, U, e] - 
u  (-v^2 + CC[u, v]) Gu[u, v, U, e]) y0uv) +
v (ImplicitD[
v  (-v^2 + CC[u, v]) Gu[u, v, U, e] + 
u  (v^2 + CC[u, v]) Gv[u, v, U, e], {G[u, v, U, e] == 0, 
InF[U, e] == 0}, {U, e}, v] y0u + 
ImplicitD[
v  (-u^2 + CC[u, v]) Gv[u, v, U, e] - 
u  (-v^2 + CC[u, v]) Gu[u, v, U, e], {G[u, v, U, e] == 0, 
InF[U, e] == 0}, {U, e}, v] y0v +
(v  (-v^2 + CC[u, v]) Gu[u, v, U, e] + 
u  (v^2 + CC[u, v]) Gv[u, v, U, 
e]) y0uv + (v  (-u^2 + CC[u, v]) Gv[u, v, U, e] - 
u  (-v^2 + CC[u, v]) Gu[u, v, U, e]) y0vv))]
\end{lstlisting}
So, above with \eqref{eq:Ge0} and \eqref{eq:sk}
\begin{equation*}
\begin{split}
\partial_\varrho F(\frac{y_u}{y_v}\qnt{k, \varrho; \epsilon}, k, \varrho)\Big|_{\varrho=\epsilon=0; \nabla y=\nabla\Upsilon; \nabla^2 y=\nabla^2\Upsilon}
&=-\frac{\partial_\varrho\qnt{s' \cdot \nabla_U G}}{G_\varrho \qnt{s' \cdot \nabla_U k}}\Big|_{\varrho=\epsilon=0; \nabla y=\nabla\Upsilon; \nabla^2 y=\nabla^2\Upsilon}\\
&=-\frac{B''}{B'}-\frac{(2 \bar{q}^2 + 7 \bar{q} u - 12 u^2) v}{u (2 \bar{q} + u) (\bar{q} - 2 u)},
\end{split}
\end{equation*}
\begin{lstlisting}
Simplify[-(((b  (Q - u)  (2  Q^2 + 7  Q  u - 12  u^2))/(Q - 2  u) + 
c  (2  Q + u)  v)/(Q - u)^2)/((b  Q^2  u)/(2  Q  v - 
2  u  v) (2  (2  Q + u))/Q^2)]
\end{lstlisting}
and
\begin{equation*}
\begin{split}
&-\frac{\partial_\varrho\qnt{s' \cdot \nabla_U G}}{G_\varrho \qnt{s' \cdot \nabla_U k}}\Big|_{\varrho=\epsilon=0; \nabla y=\nabla\Upsilon; \nabla^2 y=\nabla^2\Upsilon}+\frac{\partial_\varrho\qnt{s_g' \cdot \nabla_U G}}{G_\varrho \qnt{s_g' \cdot \nabla_U k}}\Big|_{\varrho=0}\\
&\quad=-\frac{B''}{B'}-\frac{5 (3 \bar{q} - 4 u) v}{(\bar{q} - 2 u) (2 \bar{q} + u)}.
\end{split}
\end{equation*}
\begin{lstlisting}
-(((b   (Q - u)   (2   Q^2 + 7   Q   u - 12   u^2))/(Q - 2   u) + 
c   (2   Q + u)   v)/(Q - u)^2)/((b   Q^2   u)/(2   Q   v - 
2   u   v)  (2   (2   Q + u))/Q^2) + 
2   Q   (Q - 
2   u)   (Q - u)/((2  (2  Q + u))/Q^2  (Q - u)  u   Q^3/(2  v))
\end{lstlisting}
The proof is complete.
\end{proof}

\subsection{Approximate slope free boundary function}
First, we introduce the approximate slope free boundary function $\mathcal{F}$ by extending $(\nabla y)^\circ$ from $\mathcal{S}$ via the derivatives of $\Upsilon$.
\begin{defn}
We define the approximate slope free boundary function $\mathcal{F}$ as follows
\begin{equation*}
\begin{split}
\mathcal{F}(\Upsilon, k, \varrho; \epsilon)&=\partial_\varrho F(\frac{y_u}{y_v}\qnt{k, \varrho; \epsilon}, k, \varrho)\Big|_{\varrho=\epsilon=0; \nabla y=\nabla\Upsilon; \nabla^2 y=\nabla^2\Upsilon}\qnt{\varrho-\epsilon} \\
&\quad- \frac{s_g' \cdot \nabla_U G}{G_\varrho \qnt{s_g' \cdot \nabla_U k}}\Big|_{\varrho=\epsilon}.
\end{split}
\end{equation*}
Then the approximate boundary $\varrho=\Gamma(k)$ is determined via the following ODE 
\begin{equation}\label{eq:ApproximateUpdateODE}
\begin{split}
\frac{\dif \Gamma}{\dif k}=\mathcal{F}(\Upsilon, k, \Gamma; \epsilon)&=\partial_\varrho F(\frac{y_u}{y_v}\qnt{k, \varrho; \epsilon}, k, \varrho)\Big|_{\varrho=\epsilon=0; \nabla y=\nabla\Upsilon; \nabla^2 y=\nabla^2\Upsilon}\qnt{\Gamma-\epsilon} \\
&\quad- \frac{s_g' \cdot \nabla_U G}{G_\varrho \qnt{s_g' \cdot \nabla_U k}}\Big|_{\varrho=\epsilon},
\end{split}
\end{equation}
with the initial value condition $\Gamma(\flat)=\epsilon$.
\end{defn}
For this $\Gamma$, we have
\begin{thm}\label{thm:GammaProperties}
There exists $\epsilon_\Gamma>0$, such that for $\epsilon<\epsilon_\Gamma$, $\Gamma$ can be solved in $[\flat, \sharp]$, and we have
\begin{equation}\label{eq:GammaDomain}
0\leq\Gamma(k)\leq\epsilon,\quad\mbox{and}\quad\Gamma(\flat)=\Gamma(\sharp)=0,
\end{equation}
\begin{equation}\label{eq:GammaEstimates}
\norm{\Gamma}_{C^{2, \alpha}}\leq C\epsilon,
\end{equation}
and
\begin{equation}\label{eq:GammalowerEstimate}
\inf_{k\in[\flat, \sharp)}\abs{\frac{\Gamma(k)-\epsilon}{\epsilon\sin(\pi\frac{k-\flat}{\sharp-\flat})}}\geq C.
\end{equation}
\end{thm}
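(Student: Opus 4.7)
The plan is to treat \eqref{eq:ApproximateUpdateODE} as a linear inhomogeneous first order ODE in $\Gamma$ and solve it via the integrating factor, with special care for the degenerating coefficient near $k=\sharp$. Introducing
\begin{equation*}
a(k) := -\partial_\varrho F\Big|_{\varrho=\epsilon=0;\,\nabla y = \nabla\Upsilon;\,\nabla^2 y = \nabla^2\Upsilon}, \qquad g(k,\epsilon) := \frac{s_g'\cdot \nabla_U G}{G_\varrho\,(s_g'\cdot \nabla_U k)}\bigg|_{\varrho=\epsilon},
\end{equation*}
the ODE reads $\Gamma'(k)+a(k)\Gamma(k)=a(k)\epsilon-g(k,\epsilon)$. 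By \eqref{eq:DeF'DGexpression}, $a(k)>0$ on $[\flat,\sharp)$ and blows up like $(\alpha+1)/(\sharp-k)$ as $k\to\sharp^-$; by \eqref{eq:FexpressionG0} and \eqref{eq:DeFg'DGexpression}, $g(k,0)=0$ and $g(k,\epsilon)=b(k)\epsilon+O(\epsilon^2)$ with $b(k)=\tfrac{2(\bar q-2u)v}{u(2\bar q+u)}>0$ bounded; and the combined identity \eqref{eq:DeF-Fg'DGexpression} delivers the crucial uniform gap $a(k)-b(k)\geq c_0>0$.

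Setting $A(k)=\int_\flat^k a(s)\,ds$, the unique solution is
\begin{equation*}
\Gamma(k) = \epsilon - e^{-A(k)}\int_\flat^k g(s,\epsilon)\,e^{A(s)}\,ds,
\end{equation*}
which is $C^\infty$ on $[\flat,\sharp)$. It extends continuously up to $k=\sharp$: L'H\^opital's rule applied to $\int_\flat^k g\,e^{A}\,ds/e^{A(k)}$ gives limit $g(\sharp,\epsilon)/a(\sharp)=0$, so the boundary values at $\flat$ and $\sharp$ coincide by construction and by this limit. The two-sided bound $0\le\Gamma\le\epsilon$ then follows by monotonicity: the upper bound is immediate from $g\geq 0$; the lower bound uses the gap to write $g(s,\epsilon)\leq (1-\eta)a(s)\epsilon$ for $\epsilon<\epsilon_\Gamma$, and the identity $\int_\flat^k a\,e^A\,ds = e^{A(k)}-1$ to obtain $\epsilon-\Gamma(k)\leq (1-\eta)\epsilon(1-e^{-A(k)})<\epsilon$, so $\Gamma\geq 0$.

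For the sharp lower estimate \eqref{eq:GammalowerEstimate}, integrate by parts once in
\begin{equation*}
\epsilon - \Gamma(k) = e^{-A(k)}\int_\flat^k g(s,\epsilon)\,e^{A(s)}\,ds
\end{equation*}
against $dA$ to extract the dominant boundary term $g(k,\epsilon)/a(k)$. Near $k=\flat$, direct expansion yields $\epsilon-\Gamma(k)\sim b(\flat)\,\epsilon\,(k-\flat)$, matching $\epsilon\sin(\pi(k-\flat)/(\sharp-\flat))\sim \epsilon\pi(k-\flat)/(\sharp-\flat)$ up to a nonzero constant. Near $k=\sharp$, one has $g(k,\epsilon)/a(k)\sim b(\sharp)\,\epsilon\,(\sharp-k)/(\alpha+1)$, matching $\epsilon\pi(\sharp-k)/(\sharp-\flat)$ up to a nonzero constant. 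On any compact subset of $(\flat,\sharp)$, $\epsilon-\Gamma$ is of order $\epsilon$ with a fixed sign, so the ratio is uniformly bounded below. Combining these three regimes yields \eqref{eq:GammalowerEstimate} with a uniform $C>0$.

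The $C^{2,\alpha}$ estimate \eqref{eq:GammaEstimates} is obtained by differentiating the ODE once and twice and invoking the $L^\infty$ bound $\|\Gamma-\epsilon\|_\infty\leq C\epsilon$ just established, together with the $\epsilon$-prefactor of the forcing term. The main obstacle is the non-integrable singularity of $a$ at $k=\sharp$ coming from $B''/B'=(\alpha+1)/(\sharp-k)$ through \eqref{eq:Bassumptions}: to secure bounds uniform in $\epsilon$ up to the closed interval, I plan to change variable $\tau=-\log(\sharp-k)$ near the right endpoint, which converts $(\alpha+1)/(\sharp-k)\,dk$ into the bounded coefficient $(\alpha+1)\,d\tau$ and the singular ODE into a regular linear ODE on the half-line $[\tau_0,+\infty)$; the estimates above are then straightforward in $\tau$ and pull back to the $k$ variable without loss.
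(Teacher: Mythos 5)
Your route coincides with the paper's in its main structure: you solve \eqref{eq:ApproximateUpdateODE} explicitly with the integrating factor $e^{A(k)}$, which is exactly the paper's $B'(k)e^{\int_\flat^k H(\tau)\,\dif\tau}$ in \eqref{eq:GammaExpression} since $a=(\ln B')'+H$ by \eqref{eq:DeF'DGexpression}; your ``uniform gap'' $a-b\ge c_0$ is precisely the paper's positivity of $\mathcal{F}(\Upsilon,k,0;\epsilon)$ obtained from \eqref{eq:DeF-Fg'DGexpression}, and your two-sided bound is an equivalent integral-form version of the paper's barrier argument for \eqref{eq:GammaDomain}. (Note the intended content of the endpoint claim is $(\Gamma-\epsilon)(\flat)=(\Gamma-\epsilon)(\sharp)=0$; the literal ``$\Gamma(\flat)=\Gamma(\sharp)=0$'' contradicts the initial condition $\Gamma(\flat)=\epsilon$, and your computation, like the paper's, delivers the corrected version.)

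Two steps of your endpoint analysis at $k=\sharp$ do not hold as written. First, the single integration by parts ``against $dA$'' does not isolate the leading term: since $e^{A(s)}\sim C(\sharp-s)^{-\alpha-1}$ and $g/a\sim g(s)(\sharp-s)/(\alpha+1)$, the remainder $\int_\flat^k (g/a)'e^{A}\,\dif s$ is of the \emph{same} order as $\int_\flat^k g e^{A}\,\dif s$ (their ratio tends to $1/(\alpha+1)$), so your asymptotic $\epsilon-\Gamma\sim g(k,\epsilon)/a(k)$ carries the wrong constant $1/(\alpha+1)$; the correct leading term, obtained in \eqref{eq:GammaExpressionSharp} by integrating by parts against $\dif B=B'\,\dif t$ (whose remainder $\int_\flat^k B\cdot O(1)\,\dif t$ genuinely is lower order than $B(k)$), is $\epsilon-\Gamma\sim\epsilon(\sharp-k)h(k,\epsilon)/\alpha$. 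Both constants are positive, so \eqref{eq:GammalowerEstimate} survives, but you must either switch to the paper's integration by parts or iterate yours and resum. Second, and more seriously, the substitution $\tau=-\log(\sharp-k)$ does \emph{not} ``pull back without loss'': each $k$-derivative costs a factor $e^{\tau}=(\sharp-k)^{-1}$, so uniform bounds on $\Gamma_\tau,\Gamma_{\tau\tau}$ of size $O(\epsilon e^{-\tau})$ only return $|\Gamma''(k)|=O(\epsilon(\sharp-k)^{-1})$, which is strictly weaker than \eqref{eq:GammaEstimates}. The second-derivative control near $\sharp$ rests on the precise structure of \eqref{eq:GammaExpressionSharp} — a smooth term $-\tfrac{\epsilon}{\alpha}(\sharp-k)h(k,\epsilon)$ plus a correction $\tfrac{\epsilon}{\alpha}(\sharp-k)^{\alpha+1}\Psi(k)$ with $\Psi=O(1)$, $\Psi'=O((\sharp-k)^{-\alpha})$ — and the logarithmic change of variables discards exactly the cancellation that this decomposition exposes. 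You need to differentiate the explicit formula, as the paper does, rather than argue through a regularized ODE on the half-line.
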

\begin{proof}
By \eqref{eq:sg0}, we can rewrite $\mathcal{F}(\Upsilon, k, \varrho; \epsilon)$ as follows
\begin{equation*}
\begin{split}
\mathcal{F}(\Upsilon, k, \varrho; \epsilon)
&=\partial_\varrho F(\frac{y_u}{y_v}\qnt{k, \varrho; \epsilon}, k, \varrho)\Big|_{\varrho=\epsilon=0; \nabla y=\nabla\Upsilon; \nabla^2 y=\nabla^2\Upsilon}\qnt{\varrho-\epsilon}\\
&\quad - \partial_\varrho\frac{s_g' \cdot \nabla_U G}{G_\varrho \qnt{s_g' \cdot \nabla_U k}}\Big|_{\varrho=0}\epsilon\\
&\quad- \underbrace{\qnt{\frac{s_g' \cdot \nabla_U G}{G_\varrho \qnt{s_g' \cdot \nabla_U k}}\Big|_{\varrho=\epsilon}-\frac{s_g' \cdot \nabla_U G}{G_\varrho \qnt{s_g' \cdot \nabla_U k}}\Big|_{\varrho=0} -\partial_\varrho\frac{s_g' \cdot \nabla_U G}{G_\varrho \qnt{s_g' \cdot \nabla_U k}}\Big|_{\varrho=0}\epsilon}}_{\leq C\epsilon^2}.
\end{split}
\end{equation*}
When $\varrho=\epsilon$, by \eqref{eq:DeFg'DGexpression}
\begin{equation*}
\begin{split}
\mathcal{F}(\Upsilon, k, \epsilon; \epsilon)&=- \partial_\varrho\frac{s_g' \cdot \nabla_U G}{G_\varrho \qnt{s_g' \cdot \nabla_U k}}\Big|_{\varrho=0}\epsilon\\
&\quad- \qnt{\frac{s_g' \cdot \nabla_U G}{G_\varrho \qnt{s_g' \cdot \nabla_U k}}\Big|_{\varrho=\epsilon}-\frac{s_g' \cdot \nabla_U G}{G_\varrho \qnt{s_g' \cdot \nabla_U k}}\Big|_{\varrho=0}-\partial_\varrho\frac{s_g' \cdot \nabla_U G}{G_\varrho \qnt{s_g' \cdot \nabla_U k}}\Big|_{\varrho=0}\epsilon}\\
&=-\frac{\partial_\varrho\qnt{s_g' \cdot \nabla_U G}}{G_\varrho \qnt{s_g' \cdot \nabla_U k}}\Big|_{\varrho=0}\epsilon\\
&\quad- \qnt{\frac{s_g' \cdot \nabla_U G}{G_\varrho \qnt{s_g' \cdot \nabla_U k}}\Big|_{\varrho=\epsilon}-\frac{s_g' \cdot \nabla_U G}{G_\varrho \qnt{s_g' \cdot \nabla_U k}}\Big|_{\varrho=0}-\partial_\varrho\frac{s_g' \cdot \nabla_U G}{G_\varrho \qnt{s_g' \cdot \nabla_U k}}\Big|_{\varrho=0}\epsilon}\\
&=-\frac{2 (\bar{q} - 2 u) v}{u (2\bar{q} + u)}\epsilon\\
&\quad\underbrace{-\qnt{\frac{s_g' \cdot \nabla_U G}{G_\varrho \qnt{s_g' \cdot \nabla_U k}}\Big|_{\varrho=\epsilon}-\frac{s_g' \cdot \nabla_U G}{G_\varrho \qnt{s_g' \cdot \nabla_U k}}\Big|_{\varrho=0}-\partial_\varrho\frac{s_g' \cdot \nabla_U G}{G_\varrho \qnt{s_g' \cdot \nabla_U k}}\Big|_{\varrho=0}\epsilon}}_{\leq C\epsilon^2}\\
&\leq-\frac{ (\bar{q} - 2 u) v}{u (2\bar{q} + u)}\epsilon<0,
\end{split}
\end{equation*}
and when $\varrho=0$, by \eqref{eq:DeF-Fg'DGexpression}
\begin{equation*}
\begin{split}
\mathcal{F}(\Upsilon, k, 0; \epsilon)
&=-\epsilon\qnt{-\frac{\partial_\varrho\qnt{s' \cdot \nabla_U G}}{G_\varrho \qnt{s' \cdot \nabla_U k}}\Big|_{\varrho=\epsilon=0; \nabla y=\nabla\Upsilon; \nabla^2 y=\nabla^2\Upsilon}+\frac{\partial_\varrho\qnt{s_g' \cdot \nabla_U G}}{G_\varrho \qnt{s_g' \cdot \nabla_U k}}\Big|_{\varrho=0}}\\
&\quad- \qnt{\frac{s_g' \cdot \nabla_U G}{G_\varrho \qnt{s_g' \cdot \nabla_U k}}\Big|_{\varrho=\epsilon}-\frac{s_g' \cdot \nabla_U G}{G_\varrho \qnt{s_g' \cdot \nabla_U k}}\Big|_{\varrho=0}-\partial_\varrho\frac{s_g' \cdot \nabla_U G}{G_\varrho \qnt{s_g' \cdot \nabla_U k}}\Big|_{\varrho=0}\epsilon}\\
&=-\epsilon\qnt{-\frac{B''}{B'}-\frac{5 (3 \bar{q} - 4 u) v}{(\bar{q} - 2 u) (2 \bar{q} + u)}}\\
&\quad\underbrace{- \qnt{\frac{s_g' \cdot \nabla_U G}{G_\varrho \qnt{s_g' \cdot \nabla_U k}}\Big|_{\varrho=\epsilon}-\frac{s_g' \cdot \nabla_U G}{G_\varrho \qnt{s_g' \cdot \nabla_U k}}\Big|_{\varrho=0}-\partial_\varrho\frac{s_g' \cdot \nabla_U G}{G_\varrho \qnt{s_g' \cdot \nabla_U k}}\Big|_{\varrho=0}\epsilon}}_{\leq C\epsilon^2}\\
&\geq \frac{\epsilon}{2}\qnt{\frac{B''}{B'}+\frac{5 (3 \bar{q} - 4 u) v}{(\bar{q} - 2 u) (2 \bar{q} + u)}}>0\\
&=_{\sharp-k<\delta}\frac{\epsilon}{2}\qnt{\frac{\alpha+1}{\sharp-k}+\frac{5 (3 \bar{q} - 4 u) v}{(\bar{q} - 2 u) (2 \bar{q} + u)}}>0.
\end{split}
\end{equation*}
As illustrated in Figure \ref{fig:UpdateVectors} and Figure \ref{fig:UpdateVectorsCorner}, above two indicate \eqref{eq:GammaDomain} for $\epsilon<<1$.
\begin{figure}[htbp]
	\setlength{\unitlength}{1bp}
	\begin{center}
		\begin{picture}(190,190)
		\put(-30,-10){\includegraphics[scale=0.4]{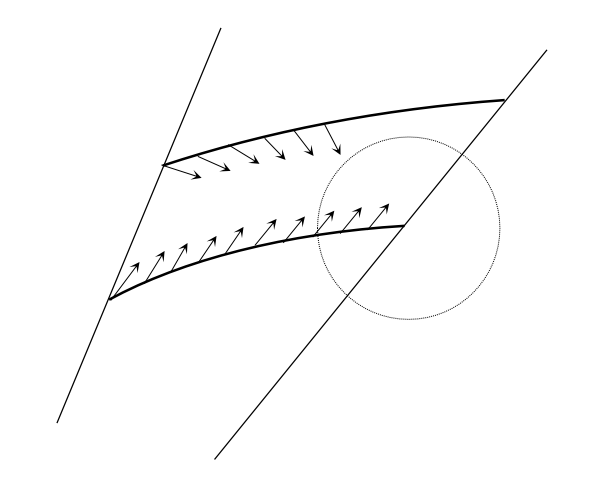}}
		\put(32,135){\tiny$\frac{u}{v}=\flat$}
		\put(130,130){\tiny$\frac{u}{v}=\sharp$}
		\put(-20,88){\tiny$\mathcal{S}:\varrho=0$}
		\put(-56,48){\tiny$\mathcal{S}:\varrho=\epsilon<<1$}
		\end{picture}
	\end{center}
	\caption{update vectors}\label{fig:UpdateVectors}
\end{figure}
\begin{figure}[htbp]
	\setlength{\unitlength}{1bp}
	\begin{center}
		\begin{picture}(190,190)
		\put(-20,-10){\includegraphics[scale=0.3]{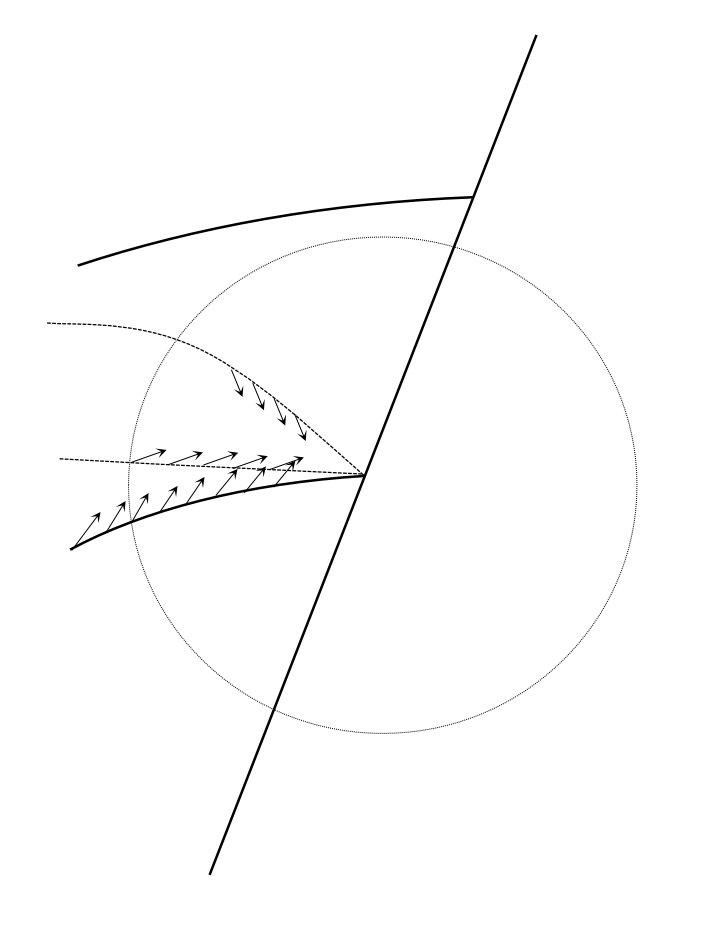}}
		\put(105,190){\tiny$\frac{u}{v}=\sharp$}
		\put(-40,135){\tiny$\mathcal{S}:\varrho=0$}
		\put(-60,71){\tiny$\mathcal{S}:\varrho=\epsilon<<1$}
		\end{picture}
	\end{center}
	\caption{update vectors around infinity flow field}\label{fig:UpdateVectorsCorner}
\end{figure}

Then, by the expression \eqref{eq:sg'DGexpression}, we set
\begin{equation*}
\frac{s_g' \cdot \nabla_U G}{G_\varrho \qnt{s_g' \cdot \nabla_U k}}\Big|_{\varrho=\epsilon}=\epsilon \frac{s_g' \cdot \nabla_U G}{\varrho G_\varrho \qnt{s_g' \cdot \nabla_U k}}\Big|_{\varrho=\epsilon}=:\epsilon h(k, \epsilon)>0.
\end{equation*}
Therefore, $h\in C^\infty$ around \eqref{eq:ShockpolarS} and
\begin{equation*}
h(k, 0)=\partial_\varrho\frac{s_g' \cdot \nabla_U G}{G_\varrho \qnt{s_g' \cdot \nabla_U k}}\Big|_{\varrho=\epsilon=0}=\frac{2 (\bar{q} - 2 u) v}{u (2\bar{q} + u)}>0.
\end{equation*}
As a result, we can rewrite \eqref{eq:ApproximateUpdateODE} as follows
\begin{equation}\label{eq:GammaODEk}
\begin{split}
\frac{\dif \Gamma}{\dif k}&=\qnt{-\frac{B''}{B'}-\underbrace{\frac{(2 \bar{q}^2 + 7 \bar{q} u - 12 u^2) v}{u (2 \bar{q} + u) (\bar{q} - 2 u)}}_{H(k)>0}}\qnt{\Gamma-\epsilon}- \epsilon h(k, \epsilon)\\
&=\qnt{-\qnt{\ln B'(k)}'-H(k)}\qnt{\Gamma-\epsilon}- \epsilon h(k, \epsilon).
\end{split}
\end{equation}
For \eqref{eq:GammaODEk}, we have
\begin{equation}\label{eq:GammaExpression}
\begin{split}
\frac{\dif \qnt{\Gamma-\epsilon}}{\dif k}+\qnt{\qnt{\ln B'(k)}'+H(k)}\qnt{\Gamma-\epsilon}&=- \epsilon h(k, \epsilon)\\
\frac{\dif }{\dif k}\qnt{B'(k)\me^{\int_{\flat}^{k}H(\tau)\dif\tau}\qnt{\Gamma-\epsilon}}&=- \epsilon B'(k)\me^{\int_{\flat}^{k}H(\tau)\dif\tau}h(k, \epsilon)\\
B'(k)\me^{\int_{\flat}^{k}H(\tau)\dif\tau}\qnt{\Gamma-\epsilon}&=-\epsilon\int_{\flat}^{k} B'(t)\me^{\int_{\flat}^{t}H(\tau)\dif\tau}h(t, \epsilon)\dif t\\
\qnt{\Gamma-\epsilon}&=-\frac{\epsilon}{B'(k)}\int_{\flat}^{k} B'(t)\me^{-\int_{t}^{k}H(\tau)\dif\tau}h(t, \epsilon)\dif t<0.
\end{split}
\end{equation}

To consider the regularity around $k=\sharp$, we apply \eqref{eq:Bassumptions} to have
\begin{equation*}
\qnt{\Gamma-\epsilon}\overset{\sharp-k<\delta}{=}-\frac{\epsilon}{\alpha}(\sharp-k)^{\alpha+1}\int_{\flat}^{k} B'(t)\me^{-\int_{t}^{k}H(\tau)\dif\tau}h(t, \epsilon)\dif t.
\end{equation*}
Integrate by part,
\begin{equation}\label{eq:GammaExpressionSharp}
\begin{split}
\qnt{\Gamma-\epsilon}&=-\frac{\epsilon}{\alpha}(\sharp-k)^{\alpha+1}\bigg(B(t)\me^{-\int_{t}^{k}H(\tau)\dif\tau}h(t, \epsilon)\big|_\flat^k\\
&\quad-\int_{\flat}^{k}B(t)\qnt{H(t)\me^{-\int_{t}^{k}H(\tau)\dif\tau}h(t, \epsilon)+\me^{-\int_{t}^{k}H(\tau)\dif\tau}h_1(t, \epsilon)}\dif t\bigg)\\
&=-\frac{\epsilon}{\alpha}(\sharp-k)^{\alpha+1}\bigg(B(k)h(k, \epsilon)-B(\flat)\me^{-\int_{\flat}^{k}H(\tau)\dif\tau}h(\flat, \epsilon)\\
&\quad-\int_{\flat}^{k}B(t)\qnt{H(t)\me^{-\int_{t}^{k}H(\tau)\dif\tau}h(t, \epsilon)+\me^{-\int_{t}^{k}H(\tau)\dif\tau}h_1(t, \epsilon)}\dif t\bigg)\\
&=-\frac{\epsilon}{\alpha}(\sharp-k)^{\alpha+1}\bigg((\sharp-k)^{-\alpha}h(k, \epsilon)-B(\flat)\me^{-\int_{\flat}^{k}H(\tau)\dif\tau}h(\flat, \epsilon)\\
&\quad-\int_{\flat}^{k}B(t)\qnt{H(t)\me^{-\int_{t}^{k}H(\tau)\dif\tau}h(t, \epsilon)+\me^{-\int_{t}^{k}H(\tau)\dif\tau}h_1(t, \epsilon)}\dif t\bigg)\\
&=-\frac{\epsilon}{\alpha}(\sharp-k)h(k, \epsilon)+\frac{\epsilon}{\alpha}(\sharp-k)^{\alpha+1}B(\flat)\me^{-\int_{\flat}^{k}H(\tau)\dif\tau}h(\flat, \epsilon)\\
&\quad+\frac{\epsilon}{\alpha}(\sharp-k)^{\alpha+1}\int_{\flat}^{k}B(t)\qnt{H(t)h(t, \epsilon)+h_t(t, \epsilon)}\me^{-\int_{t}^{k}H(\tau)\dif\tau}\dif t.
\end{split}
\end{equation}
Since $B\in C^\infty$, by \eqref{eq:GammaExpression}, \eqref{eq:GammaEstimates} holds for $k\in[\flat, \sharp-\frac{\delta}{2}]$. Then, for the interval $[\sharp-\delta, \sharp]$ part, we apply \eqref{eq:GammaExpressionSharp} to show \eqref{eq:GammaEstimates}. To prove \eqref{eq:GammalowerEstimate}, by \eqref{eq:GammaExpressionSharp}, we have
\begin{equation*}
\lim\limits_{k\rightarrow\sharp^+}\frac{\abs{\Gamma(k)-\epsilon}}{\epsilon\sin(\pi\frac{k-\flat}{\sharp-\flat})}=\frac{\sharp-\flat}{\alpha\pi}h(\sharp, \epsilon).
\end{equation*}
Hence, there exists a positive constant $\Delta\in(0, \delta)$, such that
\begin{equation*}
\inf_{k\in[\sharp-\Delta, \sharp]}\abs{\frac{\Gamma(k)-\epsilon}{\epsilon\sin(\pi\frac{k-\flat}{\sharp-\flat})}}>\frac{\sharp-\flat}{2\alpha\pi}h(\sharp, 0),
\end{equation*}
for $\epsilon<\epsilon_\sharp$. For the $[\flat, \sharp-\frac{\Delta}{2}]$ part, in view of \eqref{eq:GammaExpression}, we have
\begin{equation*}
\begin{split}
\inf_{k\in[\flat, \sharp-\frac{\Delta}{2}]}\abs{\frac{\Gamma(k)-\epsilon}{\epsilon\sin(\pi\frac{k-\flat}{\sharp-\flat})}}&\geq\inf_{k\in[\flat, \sharp-\frac{\Delta}{2}]} \frac{1}{\epsilon\pi\frac{k-\flat}{\sharp-\flat}}\frac{\epsilon}{B'(k)}\int_{\flat}^{k} B'(t)\me^{-\int_{t}^{k}H(\tau)\dif\tau}h(t, \epsilon)\dif t\\
&\geq\frac{\sharp-\flat}{2\pi }\inf_{k\in[\flat, \sharp-\frac{\Delta}{2}]}\frac{\int_{\flat}^{k} B'(t)\me^{-\int_{t}^{k}H(\tau)\dif\tau}h(t, 0)\dif t}{B'(k)(k-\flat)},
\end{split}
\end{equation*}
for $\epsilon<\epsilon_\flat$. Combine above two, we have \eqref{eq:GammalowerEstimate}.
\end{proof}

The domain bounded by $\set{(k, \varrho)|\varrho=\Gamma(k)}$ and $\mathcal{S}$ is denoted by $\Omega_\Gamma$. We define the following weighted norms
\begin{equation*}
\begin{split}
\norm{\varphi}_{C^{(-m)}_{p, \beta}([\flat, \sharp))}&=\max\limits_{i=0,...,p}\sup\limits_{k\in[\flat, \sharp)}\abs{(\sharp-k)^{m+i}\varphi^{(i)}(k)}\\
&\quad+\sup\limits_{\flat\leq k_1< k_2<\sharp}\abs{(\sharp-k_2)^{m+p+\beta}\frac{\varphi^{(p)}(k_1)-\varphi^{(p)}(k_2)}{\abs{k_1-k_2}^\beta}},
\end{split}
\end{equation*}
\begin{equation*}
\begin{split}
\norm{u}_{C^{(-m)}_{p, \beta}(\Omega_\Gamma)}&=\max\limits_{i=0,...,p}\sup\limits_{(k, \varrho)\in\Omega}\abs{\qnt{\sharp-k}^{m+i}\nabla^i u(k, \varrho)}\\
&\quad+\sup\limits_{\flat\leq k_1\leq k_2<\sharp}\abs{\qnt{\sharp-k_2}^{m+p+\beta}\frac{\nabla^{p} u(k_1, \varrho_1)-\nabla^{p} u(k_2, \varrho_2)}{\abs{(k_1, \varrho_1)-(k_2, \varrho_2)}^\beta}},
\end{split}
\end{equation*}
and
\begin{equation*}
\begin{split}
\norm{f}_{C^{(1+\beta)}_{2, \beta}([\flat, \sharp))}&=\max\limits_{i=0, 1, 2}\sup\limits_{k\in[\flat, \sharp)}\abs{(\sharp-k)f^{(i)}(k)}+\sup\limits_{\flat\leq k_1< k_2<\sharp}\abs{(\sharp-k_2)\frac{f^{(2)}(k_1)-f^{(2)}(k_2)}{\abs{k_1-k_2}^\beta}}.
\end{split}
\end{equation*}
By Theorem \ref{thm:GammaProperties}, $\Omega_\Gamma$ collapses to the curve segment \eqref{eq:ShockpolarS} corresponding to $\frac{u}{v} \in [\flat, \sharp)$. Owing to \eqref{eq:SupersonicWallCondition}, this part is subsonic and the linear equation \eqref{eq:ySecondOrderEquation} is uniform elliptic. Thus, for $\epsilon<<1$, \eqref{eq:ySecondOrderEquation} is elliptic in $\Omega_\Gamma$ and we have
\begin{thm}
For this $\Gamma$, we consider equation \eqref{eq:ySecondOrderEquation} with the mixed boundary value condition \eqref{eq:yWallVelocityCondition} and \eqref{eq:yShockObliqueDerivativeU} in $\Omega_\Gamma$. It admits a solution $y^\Gamma$ and holds
\begin{equation}\label{eq:yGammaEstimate}
\norm{y^\Gamma}_{C_{2, \beta}^{(-\alpha)}(\Omega)}\le \bar{C}\norm{B}_{C_{2, \beta}^{(-\alpha)}([\flat, \sharp))}.
\end{equation}
\end{thm}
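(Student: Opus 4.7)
My strategy is to reduce this mixed boundary value problem on the degenerating narrow domain $\Omega_\Gamma$ to a uniform weighted Schauder estimate combined with the method of continuity for existence. The proof will proceed in four stages.

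First, I would verify the structural hypotheses. By Theorem \ref{thm:GammaProperties}, for $\epsilon$ small enough we have $0\le \Gamma(k)\le \epsilon$, so $\Omega_\Gamma$ lies in an $O(\epsilon)$ neighbourhood of the shock polar \eqref{eq:ShockpolarS}. Along that polar, assumption \eqref{eq:SupersonicWallCondition} forces $u<\bar q/3$, which by \eqref{eq:RhoCExpression2} places us strictly inside the subsonic region $q<c_0$. Hence equation \eqref{eq:ySecondOrderEquation} is uniformly elliptic on $\Omega_\Gamma$ with ellipticity constants independent of $\epsilon$. The oblique direction in \eqref{eq:yShockObliqueDerivativeU} is uniformly transversal to $\mathcal{S}$, as can be read off from the shock polar calculations already performed in the previous subsections, and the wall condition \eqref{eq:yWallVelocityCondition} is Dirichlet with the prescribed data $B$. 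Thus the problem is a genuine linear elliptic mixed BVP with a singular Dirichlet datum at the corner $k=\sharp$.

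Second, I would establish the a priori estimate \eqref{eq:yGammaEstimate}. The behaviour $B(k)\sim (\sharp-k)^{-\alpha}$ dictates the weight $(\sharp-k)^{\alpha+i}$ on the $i$-th derivative, which is exactly the content of $C^{(-\alpha)}_{2,\beta}$. To secure $\bar C$ uniformly both in $\epsilon$ and in the corner, I would use a dyadic decomposition of $[\flat,\sharp)$ into annular strips where $\sharp-k$ varies only by a factor of $2$, and rescale each strip together with the corresponding slice of $\Omega_\Gamma$ to a reference domain of fixed aspect ratio. The lower bound \eqref{eq:GammalowerEstimate} guarantees that this rescaling is uniformly bi-Lipschitz. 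On each rescaled piece the classical Schauder estimate for mixed Dirichlet/oblique BVPs on smooth domains applies, and reassembling the local estimates with the weight-consistent scaling produces \eqref{eq:yGammaEstimate}.

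Third, for existence I would apply the method of continuity on the family parameterised by $t\in[0,1]$ with Dirichlet datum $tB$ on $\mathcal{W}^\Gamma$. For $t=0$ the trivial solution works; openness of the solvable set follows from linear elliptic theory on the fixed $C^{2,\beta}$ domain $\Omega_\Gamma$; closedness follows from the uniform weighted estimate together with a compactness argument in a slightly weaker weighted space. Uniqueness — required for Fredholm solvability at each $t$ — reduces to a maximum principle argument for the mixed BVP, where the inward-pointing component of the oblique vector on $\mathcal{S}$ is precisely the positivity quantified by \eqref{eq:sg'DGexpression}.

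The principal obstacle is Stage 2, specifically extracting a constant $\bar C$ that is independent of both $\epsilon$ (as $\Omega_\Gamma$ collapses to a curve) and of the distance to the corner $k=\sharp$ (where the datum blows up like $(\sharp-k)^{-\alpha}$). A naive Schauder estimate gives a constant that deteriorates under either limit. Overcoming this requires exploiting the narrow-region structure — essentially interpreting \eqref{eq:ySecondOrderEquation} as a one-dimensional two-point BVP in $\varrho$ along each fibre $k=\text{const}$, plus an $O(\epsilon^2)$ elliptic correction in $k$ — while simultaneously performing the dyadic rescaling near $k=\sharp$ so that both degeneracies are absorbed into a single weight-consistent bootstrap.
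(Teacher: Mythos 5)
There is a genuine gap in your existence argument (Stage 3). The problem is linear, so running the method of continuity over the family of data $tB$, $t\in[0,1]$, is vacuous: by linearity, solvability for $tB$ with any $t>0$ is equivalent to solvability for $B$ itself, and the deformation never passes through a problem that is any easier than the original one. In particular it does nothing to address the actual obstruction to existence here, which is that the Dirichlet datum $B(k)=(\sharp-k)^{-\alpha}$ is \emph{unbounded} at the corner $k=\sharp$, so the standard solvability theory for mixed Dirichlet/oblique problems (which presupposes bounded, say continuous, boundary data) does not apply directly. The paper's route is different and does the real work: it truncates the datum by a monotone sequence of bounded smooth functions $B_n$ with $0<B_{n-1}\le B_n\to B$ and $B_n=B$ on $[\flat,\sharp-\tfrac1n]$, solves each truncated mixed problem by the theory in \cite{Lieberman2013WSPCH}, uses the maximum principle to get $0<y^\Gamma_{n-1}<y^\Gamma_n$, and obtains $y^\Gamma$ as the monotone limit, with the uniform weighted bound \eqref{eq:yGammaEstimate} supplying the compactness needed for the limit to solve the problem. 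Your proposal needs either this truncation step or an a priori solvability theory set up directly in the weighted space; as written, existence is not established.

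On Stage 2: the paper does not prove the weighted Schauder estimate at all — it is quoted from \cite{HuHuangArxiv2024} — so you are attempting to reprove a cited black box. Your dyadic-rescaling sketch is the standard template for corner weights, but it glosses over the specific degeneracy here: by Theorem \ref{thm:GammaProperties} and \eqref{eq:GammalowerEstimate} the width of $\Omega_\Gamma$ in the $\varrho$-direction is comparable to $\epsilon\sin\bigl(\pi\tfrac{k-\flat}{\sharp-\flat}\bigr)$, so the domain pinches at \emph{both} ends $k=\flat$ and $k=\sharp$, and a dyadic strip at scale $\sharp-k\sim 2^{-j}$ has aspect ratio $\epsilon\,2^{j}\cdot(\text{const})$ rather than a fixed one; the rescaling to a reference domain must therefore be anisotropic and $\epsilon$-dependent, which is exactly where a naive Schauder constant degenerates. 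Your closing remark that one should treat the equation as a two-point BVP in $\varrho$ on each fibre plus an elliptic correction is the right instinct for why a uniform constant can survive, but it is an idea, not a proof, and the burden of making it precise is exactly what the citation to \cite{HuHuangArxiv2024} is carrying.
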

\begin{proof}
$B$ is unbounded. To solve it, let $B_n\in C^\infty$ 
\begin{equation*}
0<B_{n-1}\leq B_n\rightarrow B,
\end{equation*}
\begin{equation*}
B_n|_{[\flat, \sharp-\frac{1}{n}]}=B|_{[\flat, \sharp-\frac{1}{n}]},
\end{equation*}
and
\begin{equation*}
B_n|_{[\sharp-\frac{1}{2n}, \sharp]}=B(\sharp-\frac{1}{2n})=(2n)^\alpha.
\end{equation*}
Substituting $B$ by $B_n$, we solve \eqref{eq:ySecondOrderEquation} with 
\begin{equation}\label{eq:yWallVelocityConditionApproximate}
y^\Gamma_n=B_n\quad\mbox{on}\quad \varrho=\Gamma(k),
\end{equation}
and \eqref{eq:yShockObliqueDerivativeU} to obtain the solution $y^\Gamma_n$ by \cite{Lieberman2013WSPCH}. Then by maximum principle, we have
\begin{equation*}
0<y^\Gamma_{n-1}<y^\Gamma_n.
\end{equation*}
Let
\begin{equation*}
\lim_{n\rightarrow\infty} y^\Gamma_n=y^\Gamma.
\end{equation*}
$y^\Gamma$ is the solution. The estimate \eqref{eq:yGammaEstimate} is a conclusion of \cite{HuHuangArxiv2024}.
\end{proof}

We have the following approximate estimates
\begin{thm}\label{thm:PerturbationEstimates}
We have
\begin{equation}\label{eq:PerturbationEstimate2}
\abs{\nabla^2 y^\Gamma(k, \varrho)-\nabla^2\Upsilon(k)}\leq C\epsilon^\beta(\sharp-k)^{-\alpha-2}\norm{B}_{C_{2, \beta}^{(-\alpha)}([\flat, \sharp))},
\end{equation}
and
\begin{equation}\label{eq:PerturbationEstimate01}
\abs{\nabla^i y^\Gamma(k, \varrho)-\nabla^i\Upsilon(k)}\leq C\epsilon(\sharp-k)^{-\alpha-i}\norm{B}_{C_{2, \beta}^{(-\alpha)}([\flat, \sharp))}\quad\mbox{for}\quad i=0,1.
\end{equation}
\end{thm}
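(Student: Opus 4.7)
The plan is to compare $y^\Gamma$ on $\Omega_\Gamma$ with a Taylor-type extension $\tilde\Upsilon$ of the limit solution $\Upsilon$ off of $\mathcal{S}$, and then apply the uniform weighted narrow-region Schauder estimate underlying \eqref{eq:yGammaEstimate}. Since Section 3 supplies $\Upsilon$ together with $\nabla\Upsilon|_{\mathcal S}$ and $\nabla^2\Upsilon|_{\mathcal S}$ in closed form, I would define
\begin{equation*}
\tilde\Upsilon(k,\varrho) := \Upsilon(k) + \Upsilon_\varrho(k)\varrho + \tfrac12 \Upsilon_{\varrho\varrho}(k)\varrho^2
\end{equation*}
on $\Omega_\Gamma$, with $\Upsilon_\varrho(k)$ and $\Upsilon_{\varrho\varrho}(k)$ obtained from the formulas of Section 3 by the chain rule \eqref{eq:Partiale}. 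By construction $\tilde\Upsilon$ agrees with $\Upsilon$ to second order along $\mathcal S$.

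Setting $w := y^\Gamma - \tilde\Upsilon$, I would then estimate three residuals. First, because the coefficients of $L$ in \eqref{eq:ySecondOrderEquation} are smooth and $\tilde\Upsilon$ matches $\Upsilon$ to second order on $\mathcal S$ (where $\Upsilon$ solves the equation), the PDE residual satisfies $\abs{L\tilde\Upsilon}(\sharp-k)^{\alpha+2}\le C\varrho\le C\epsilon$ throughout $\Omega_\Gamma$. Second, the wall Dirichlet residual $B(k)-\tilde\Upsilon(k,\Gamma(k))$ is $O(\Gamma(k))=O(\epsilon)$ in the appropriate weighted norm, since $\tilde\Upsilon(k,0)=B(k)$ and $\Gamma\le\epsilon$ by Theorem \ref{thm:GammaProperties}. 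Third, on the shock $\varrho=\epsilon$ the oblique derivative residual is $O(\epsilon^{1+\beta})$; this is exactly the content of \eqref{eq:ApproximateBoundaryEstimate}, a consequence of the design of $\Gamma$ via \eqref{eq:ApproximateUpdateODE} and the matching identities \eqref{eq:DeF'DGexpression}--\eqref{eq:DeF-Fg'DGexpression}.

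Applying the uniform weighted narrow-region Schauder estimate from \cite{HuHuangArxiv2024} (the same one giving \eqref{eq:yGammaEstimate}) to $w$, which solves a linear elliptic mixed BVP with small data, yields
\begin{equation*}
\norm{w}_{C^{(-\alpha)}_{2,\beta}(\Omega_\Gamma)}\le C\epsilon,
\end{equation*}
whence $\abs{\nabla^i w}\le C\epsilon(\sharp-k)^{-\alpha-i}$ for $i=0,1,2$. The triangle inequality combined with the direct Taylor bound $\abs{\nabla^i\tilde\Upsilon(k,\varrho)-\nabla^i\Upsilon(k)}\le C\varrho(\sharp-k)^{-\alpha-i}\le C\epsilon(\sharp-k)^{-\alpha-i}$ for $i=0,1$ delivers \eqref{eq:PerturbationEstimate01}. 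For $i=2$, however, the cubic-in-$\varrho$ Taylor remainder of $\tilde\Upsilon$ is unavailable and $\nabla^2 y^\Gamma$ is only weighted $C^\beta$, so the best bound is the Hölder interpolation $\abs{\nabla^2\tilde\Upsilon(k,\varrho)-\nabla^2\Upsilon(k)}\le C\varrho^\beta(\sharp-k)^{-\alpha-2}\le C\epsilon^\beta(\sharp-k)^{-\alpha-2}$, which gives \eqref{eq:PerturbationEstimate2}.

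The main obstacle is the uniform weighted Schauder step: the Schauder constants must be independent of the width $\epsilon$ of $\Omega_\Gamma$ (which pinches to zero at $k=\flat$ and $k=\sharp$ by \eqref{eq:GammaDomain}) and simultaneously absorb the degenerate weight $(\sharp-k)^{-\alpha}$ coming from the unbounded Dirichlet datum $B$. The lower bound \eqref{eq:GammalowerEstimate} on the width of $\Omega_\Gamma$ relative to $\epsilon\sin(\pi(k-\flat)/(\sharp-\flat))$ is crucial for preventing the Schauder constants from blowing up. A secondary subtlety is the $\epsilon$-to-$\epsilon^\beta$ loss between \eqref{eq:PerturbationEstimate01} and \eqref{eq:PerturbationEstimate2}, intrinsic to the fact that the extension $\tilde\Upsilon$ is only a quadratic polynomial in $\varrho$ and cannot capture higher-order $\varrho$-variations of $\nabla^2\Upsilon$.
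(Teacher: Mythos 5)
Your proposal takes a genuinely different route from the paper, and as written it has two gaps that I do not see how to close. The paper never builds a Taylor extension $\tilde\Upsilon$ and never applies a Schauder estimate to a difference $w$. Instead it exploits the fact that $y^\Gamma$ satisfies \emph{exactly} the Dirichlet condition on $\varrho=\Gamma(k)$, the oblique condition \eqref{eq:yShockObliqueDerivativeU} on $\varrho=\epsilon$, and the equation \eqref{eq:ySecondOrderEquation} in between, while $\Upsilon$ satisfies the corresponding restricted identities at $\varrho=0$. Differentiating the two boundary conditions once (resp.\ twice) and adding the equation produces two (resp.\ three) pointwise linear relations for $D^1\qnt{y^\Gamma-\Upsilon}$ (resp.\ $D^2\qnt{y^\Gamma-\Upsilon}$) with the same nondegenerate coefficient matrices already inverted in Section 3, and with right-hand sides built only from coefficient differences, $\Gamma'$, $\Gamma''$, and the weighted $C^\beta$ modulus of $\nabla^2y^\Gamma$ supplied by \eqref{eq:yGammaEstimate}; see \eqref{eq:W2Estimate}, \eqref{eq:C2Estimate}, \eqref{eq:S2Estimate}. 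No auxiliary boundary value problem is solved and no derivative of $\Upsilon$ beyond the second order is ever taken.

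The first gap is circularity: you control the shock residual of $w$ by citing \eqref{eq:ApproximateBoundaryEstimate}. That inequality is the conclusion of the Main Theorem, which the paper derives \emph{from} Theorem \ref{thm:PerturbationEstimates}; moreover it concerns the wall slope condition \eqref{eq:yWallPositionCondition}, not the shock condition \eqref{eq:yShockObliqueDerivativeU}, which $y^\Gamma$ satisfies exactly by construction. The second gap is quantitative and occurs at the degenerate corner $k\to\sharp$. Both $L\tilde\Upsilon$ and $\nabla^2\tilde\Upsilon-\nabla^2\Upsilon$ involve $\partial_k$ of $\Upsilon_\varrho$ and $\Upsilon_{\varrho\varrho}$, hence $B'''\sim(\sharp-k)^{-\alpha-3}$, so your interior residual is of size $\varrho\,(\sharp-k)^{-\alpha-3}$ with $\varrho\approx\epsilon$ there (the base curve $\varrho=0$ lies at distance up to $\epsilon$ from $\Omega_\Gamma$, not $\epsilon(\sharp-k)$), and the weighted bound $\abs{L\tilde\Upsilon}(\sharp-k)^{\alpha+2}\le C\epsilon$ fails by a factor $(\sharp-k)^{-1}$; similarly your interpolation only yields $\abs{\nabla^2\tilde\Upsilon-\nabla^2\Upsilon}\le C\epsilon^\beta(\sharp-k)^{-\alpha-2-\beta}$, short of \eqref{eq:PerturbationEstimate2} by $(\sharp-k)^{-\beta}$. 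The paper avoids both losses because its comparisons are taken across the \emph{width} of $\Omega_\Gamma$, which is $\abs{\Gamma-\epsilon}\le C\epsilon(\sharp-k)$ by \eqref{eq:GammaExpressionSharp}, so that $\abs{\Gamma-\epsilon}^\beta(\sharp-k)^{-\alpha-\beta-2}\le C\epsilon^\beta(\sharp-k)^{-\alpha-2}$; this is exactly where the $\epsilon^\beta$ in \eqref{eq:PerturbationEstimate2} comes from. If you wish to keep your architecture, you would need to re-expand $\tilde\Upsilon$ about the wall $\varrho=\Gamma(k)$ rather than about $\varrho=0$, verify a genuinely uniform weighted Schauder estimate for inhomogeneous data on the pinched domain, and replace the appeal to \eqref{eq:ApproximateBoundaryEstimate} by a direct coefficient-perturbation bound; at that point the argument essentially collapses back to the paper's pointwise one.
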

\begin{proof}
Consider $y^\Gamma(k, \varrho)-\Upsilon(k)$, we have
\begin{equation*}
\begin{split}
\abs{y^\Gamma(k, \varrho)-\Upsilon(k)}&=\abs{y^\Gamma(k, \varrho)-y^\Gamma(k, \Gamma(k))}\\
&\leq \max\limits_{\Gamma\leq\varrho\leq\epsilon}\abs{y_\varrho^\Gamma(k, \varrho)}\abs{\epsilon-\Gamma}\\
&\leq C\epsilon (\sharp-k)^{-\alpha}\norm{B}_{C_{2, \beta}^{(-\alpha)}([\flat, \sharp))}.
\end{split}
\end{equation*}
For $\nabla y^\Gamma(k, \varrho)-\nabla \Upsilon(k)$, differentiate the equation
\begin{equation*}
y^\Gamma(k, \Gamma(k))=B(k),
\end{equation*}
we have
\begin{equation}\label{eq:yGammaB1}
y^\Gamma_\varrho(k, \Gamma(k))\Gamma'(k)+y^\Gamma_k(k, \Gamma(k))=B'(k)\quad\mbox{on}\quad \varrho=\Gamma(k).
\end{equation}
For $\Upsilon$, we have
\begin{equation*}
\Upsilon(k)=B(k),
\end{equation*}
and \eqref{eq:WallConditionBS1} indicates the following formal derivative expression
\begin{equation}\label{eq:UpsilonB1}
\Upsilon_k=\Upsilon_u(k)u_k(k, 0)+\Upsilon_v(k)v_k(k, 0)=B'(k).
\end{equation}
For the convenience of typesetting, we introduce following notation
\begin{equation}\label{eq:W1Setting}
W^1_1(l, \varsigma)D^1y(k, \varrho):=u_k(l, \varsigma)y_u(k, \varrho)+v_k(l, \varsigma)y_v(k, \varrho).
\end{equation}

Then, \eqref{eq:yGammaB1} can be rewritten as
\begin{equation*}
W^1_1(k, \Gamma(k))D^1y^\Gamma(k, \Gamma(k))=y^\Gamma_k(k, \Gamma(k))=B'(k)-y^\Gamma_\varrho(k, \Gamma(k))\Gamma'(k).
\end{equation*}
$W^1_1(k, 0)D^1y^\Gamma(k, \varrho)$ can be expressed
\begin{equation*}
\begin{split}
&W^1_1(k, 0)D^1y^\Gamma(k, \varrho)\\
&\quad=B'(k)-y^\Gamma_\varrho(k, \Gamma(k))\Gamma'(k)+W^1_1(k, 0)D^1y^\Gamma(k, \varrho)-W^1_1(k, \Gamma(k))D^1y^\Gamma(k, \Gamma(k)).
\end{split}
\end{equation*}
Meanwhile, \eqref{eq:UpsilonB1} is reduced to
\begin{equation*}
W^1_1(k, 0)D^1\Upsilon(k)=B'(k).
\end{equation*}
Thus combine above two, we deduce
\begin{equation*}
\begin{split}
&W^1_1(k, 0)\qnt{D^1y^\Gamma(k, \varrho)-D^1\Upsilon(k)}\\
&\quad=-y^\Gamma_\varrho(k, \Gamma(k))\Gamma'(k)+W^1_1(k, 0)D^1y^\Gamma(k, \varrho)-W^1_1(k, \Gamma(k))D^1y^\Gamma(k, \Gamma(k)).
\end{split}
\end{equation*}
In view of Theorem \ref{thm:GammaProperties}, we have
\begin{equation*}
\abs{y^\Gamma_\varrho(k, \Gamma(k))\Gamma'(k)}\leq C\epsilon(\sharp-k)^{-\alpha-1}
\end{equation*}
and
\begin{equation*}
\begin{split}
&\abs{W^1_1(k, 0)D^1y^\Gamma(k, \varrho)-W^1_1(k, \Gamma(k))D^1y^\Gamma(k, \Gamma(k))}\\
&\quad\leq \abs{W^1_1(k, 0)}\abs{D^1y^\Gamma(k, \varrho)-D^1y^\Gamma(k, \Gamma(k))}\\
&\quad\quad+\abs{D^1y^\Gamma(k, \Gamma(k))}\abs{W^1_1(k, 0)-W^1_1(k, \Gamma(k))}\\
&\quad\leq C\abs{\Gamma-\epsilon}\abs{D^2y^\Gamma}+ C\epsilon \abs{D^1y^\Gamma}\\
&\quad\leq C\epsilon(\sharp-k)^{-\alpha-1}.
\end{split}
\end{equation*}
To summarise, we have
\begin{equation}\label{eq:W1Estimate}
\abs{W^1_1(k, 0)\qnt{D^1y^\Gamma(k, \varrho)-D^1\Upsilon(k)}}\leq C\epsilon(\sharp-k)^{-\alpha-1}.
\end{equation}

In \eqref{eq:yShockObliqueDerivativeU}, we set
\begin{equation}\label{eq:DefIJ}
\begin{cases}
I:=G_u(c^2-v^2)[u]-G_v(c^2-v^2)[v]+G_v[u](2uv);\\
J:=G_u(c^2-v^2)[v]+G_v(c^2-u^2)[u].
\end{cases}
\end{equation}
Then
\begin{equation}\label{eq:yGammaS1}
I(k, \epsilon; \epsilon)y_u^\Gamma(k, \epsilon)+J(k, \epsilon; \epsilon)y_v^\Gamma(k, \epsilon)=0.
\end{equation}
For \eqref{eq:ShockPolarObliqueConditionS}, we have
\begin{equation}\label{eq:UpsilonS1}
I(k, 0; 0)\Upsilon_u+J(k, 0; 0)\Upsilon_v=0.
\end{equation}
As \eqref{eq:W1Setting}, we set
\begin{equation*}
S^1_1(l, \varsigma)D^1y(k, \varrho)=I(l, \varsigma; \varsigma)y_u(k, \varrho)+J(l, \varsigma; \varsigma)y_v(k, \varrho)
\end{equation*}
Then \eqref{eq:yGammaS1} and \eqref{eq:UpsilonS1} are rewritten as follows
\begin{equation*}
S^1_1(k, \epsilon)D^1y^\Gamma(k, \epsilon)=0
\end{equation*}
and
\begin{equation*}
S^1_1(k, 0)D^1\Upsilon(k)=0.
\end{equation*}
Thus
\begin{equation*}
S^1_1(k, 0)\qnt{D^1y^\Gamma(k, \varrho)-D^1\Upsilon(k)}=S^1_1(k, 0)D^1y^\Gamma(k, \varrho)-S^1_1(k, \epsilon)D^1y^\Gamma(k, \epsilon).
\end{equation*}
In view of
\begin{equation*}
\begin{split}
S^1_1(k, 0)D^1y^\Gamma(k, \varrho)-S^1_1(k, \epsilon)D^1y^\Gamma(k, \epsilon)&=\qnt{S^1_1(k, 0)-S^1_1(k, \epsilon)}D^1y^\Gamma(k, \varrho)\\
&\quad +S^1_1(k, \epsilon)\qnt{D^1y^\Gamma(k, \varrho)-D^1y^\Gamma(k, \epsilon)},
\end{split}
\end{equation*}
as the deduction of \eqref{eq:W1Estimate}, we have
\begin{equation}\label{eq:S1Estimate}
\abs{S^1_1(k, 0)\qnt{D^1y^\Gamma(k, \varrho)-D^1\Upsilon(k)}}\leq C\epsilon(\sharp-k)^{-\alpha-1}.
\end{equation}
Combine \eqref{eq:W1Estimate} and \eqref{eq:S1Estimate}, we have \eqref{eq:PerturbationEstimate01} for $i=1$.

To estimate $\nabla^2 y^\Gamma(k, \varrho)-\nabla^2 \Upsilon(k)$, differentiating \eqref{eq:yGammaB1}, we have
\begin{equation}\label{eq:yGammaB2}
y^\Gamma_{kk}(k, \Gamma(k))=B''(k)-y^\Gamma_{k}(k, \Gamma(k))\Gamma''(k)-2y^\Gamma_{k\varrho}(k, \Gamma(k))\Gamma'(k)-y^\Gamma_{\varrho\varrho}(k, \Gamma(k))\qnt{\Gamma'(k)}^2.
\end{equation}
Since
\begin{equation*}
y^\Gamma_{kk}=y^\Gamma_{uu}u_k^2+2y^\Gamma_{uv}u_kv_k+y^\Gamma_{vv}v_k^2+y^\Gamma_uu_{kk}+y^\Gamma_vv_{kk}.
\end{equation*}

We set
\begin{equation*}
\begin{cases}
W_2^2(l, \varsigma)D^2y(k, \varrho)=u_k^2(l, \varsigma)y_{uu}(k, \varrho)+2u_kv_k(l, \varsigma)y_{uv}(k, \varrho)+v_k^2(l, \varsigma)y_{vv}(k, \varrho)\\
W_1^2(l, \varsigma)D^1y(k, \varrho)=u_{kk}(l, \varsigma)y_u(k, \varrho)+v_{kk}(l, \varsigma)y_v(k, \varrho)
\end{cases}
\end{equation*}
\eqref{eq:yGammaB2} can be rewritten as
\begin{equation*}
\begin{split}
&W_2^2(k, \Gamma(k))D^2y^\Gamma(k, \Gamma(k))+W_1^2(k, \Gamma(k))D^1y^\Gamma(k, \Gamma(k))\\
&\quad=B''(k)-y^\Gamma_{k}(k, \Gamma(k))\Gamma''(k)-2y^\Gamma_{k\varrho}(k, \Gamma(k))\Gamma'(k)-y^\Gamma_{\varrho\varrho}(k, \Gamma(k))\qnt{\Gamma'(k)}^2,
\end{split}
\end{equation*}
and we have the following expression
\begin{equation}\label{eq:yGammaB2W}
\begin{split}
&W_2^2(k, 0)D^2y^\Gamma(k, \varrho)+W_1^2(k, 0)D^1y^\Gamma(k, \varrho)\\
&\quad=\qnt{W_2^2(k, 0)D^2y^\Gamma(k, \varrho)+W_1^2(k, 0)D^1y^\Gamma(k, \varrho)}\\
&\quad\quad-\qnt{W_2^2(k, \Gamma(k))D^2y^\Gamma(k, \Gamma(k))+W_1^2(k, \Gamma(k))D^1y^\Gamma(k, \Gamma(k))}\\
&\quad\quad+B''(k)-y^\Gamma_{k}(k, \Gamma(k))\Gamma''(k)-2y^\Gamma_{k\varrho}(k, \Gamma(k))\Gamma'(k)-y^\Gamma_{\varrho\varrho}(k, \Gamma(k))\qnt{\Gamma'(k)}^2.
\end{split}
\end{equation}

For $\Upsilon$, \eqref{eq:WallConditionBS2} is equivalent to
\begin{equation}\label{eq:UpsilonB2}
\Upsilon_{uu}(k)u_k^2(k, 0)+2\Upsilon_{uv}(k)u_kv_k(k, 0)+\Upsilon_{vv}(k)v_k^2(k, 0)+\Upsilon_u(k)u_{kk}(k, 0)+\Upsilon_v(k)v_{kk}(k, 0)=B''(k).
\end{equation}
Above can be rewritten as
\begin{equation}\label{eq:UpsilonB2W}
W_2^2(k, 0)D^2\Upsilon(k)+W_1^2(k, 0)D^1\Upsilon(k)=B''(k).
\end{equation}
Combining \eqref{eq:yGammaB2W} and \eqref{eq:UpsilonB2W}, we have
\begin{equation*}
\begin{split}
&\abs{W_2^2(k, 0)D^2\qnt{y^\Gamma(k, \varrho)-\Upsilon(k)}+W_1^2(k, 0)D^1\qnt{y^\Gamma(k, \varrho)-\Upsilon(k)}}\\
&\quad\leq C\epsilon C\abs{\sharp-k}^{-\alpha-2}+C\abs{\Gamma-\epsilon}^\beta C\abs{\sharp-k}^{-\alpha-\beta-2}+C\epsilon \abs{\sharp-k}^{-\alpha-2}.
\end{split}
\end{equation*}
Adopting \eqref{eq:PerturbationEstimate01} to estimate the first order term in above, we have
\begin{equation}\label{eq:W2Estimate}
\abs{W_2^2(k, 0)D^2\qnt{y^\Gamma(k, \varrho)-\Upsilon(k)}}\leq C\epsilon^\beta\abs{\sharp-k}^{-\alpha-2}.
\end{equation}

Next, in view of \eqref{eq:ySecondOrderEquation2}, we set
\begin{equation*}
\begin{cases}
C_2^2(l, \varsigma)D^2y(k, \varrho)=(c^2-v^2)(l, \varsigma)y_{uu}(k, \varrho)+(2uv)(l, \varsigma)y_{uv}(k, \varrho)+(c^2-u^2)(l, \varsigma)y_{vv}(k, \varrho);\\
C_1^2(l, \varsigma)D^1y(k, \varrho)=C_1(l, \varsigma)y_u(k, \varrho)+C_2(l, \varsigma)y_v(k, \varrho).
\end{cases}
\end{equation*}
Then \eqref{eq:ySecondOrderEquation2} is
\begin{equation*}
C_2^2(k, \varrho)D^2y^\Gamma(k, \varrho)+C_1^2(k, \varrho)D^1y^\Gamma(k, \varrho)=0,
\end{equation*}
and \eqref{eq:y2S} yields
\begin{equation*}
C_2^2(k, 0)D^2\Upsilon(k, 0)+C_1^2(k, 0)D^1\Upsilon(k, 0)=0.
\end{equation*}
As \eqref{eq:W2Estimate}, we have
\begin{equation}\label{eq:C2Estimate}
\abs{C_2^2(k, 0)D^2\qnt{y^\Gamma(k, \varrho)-\Upsilon(k)}}\leq C\epsilon^\beta\abs{\sharp-k}^{-\alpha-2}.
\end{equation}

Finally, differentiating \eqref{eq:yGammaS1} with respect to $k$, we have
\begin{equation}\label{eq:yGammaS2}
I_k(k, \epsilon; \epsilon)y_u^\Gamma+J_k(k, \epsilon; \epsilon)y_v^\Gamma+I(k, \epsilon; \epsilon)\qnt{y_{uu}^\Gamma u_k+y_{uv}^\Gamma v_k}+J(k, \epsilon; \epsilon)\qnt{y_{uv}^\Gamma u_k+y_{vv}^\Gamma v_k}=0
\end{equation}
Meanwhile, for \eqref{eq:ShockPolarObliqueConditionS2}, we have
\begin{equation}\label{eq:UpsilonS2}
I_k(k, 0; 0)\Upsilon_u+J_k(k, 0; 0)\Upsilon_v+I(k, 0; 0)\qnt{\Upsilon_{uu} u_k+\Upsilon_{uv} v_k}+J(k, 0; 0)\qnt{\Upsilon_{uv} u_k+\Upsilon_{vv} v_k}=0
\end{equation}
Set
\begin{equation*}
\begin{cases}
S_2^2(l, \varsigma)D^2y(k, \varrho)=I(l, \varsigma; \varsigma)u_k(l, \varsigma)y_{uu}(k, \varrho)  + J(l, \varsigma; \varsigma)v_k(l, \varsigma)y_{vv}(k, \varrho)\\
\quad\quad\quad\quad\quad\quad\quad\quad\quad+ \qnt{I(l, \varsigma; \varsigma)v_k(l, \varsigma) + J(l, \varsigma; \varsigma)u_k(l, \varsigma)}y_{uv}(k, \varrho),\\
S_1^2(l, \varsigma)D^1y(k, \varrho)=I_k(l, \varsigma; \varsigma)y_u(k, \varrho)+J_k(l, \varsigma; \varsigma)y_v(k, \varrho).
\end{cases}
\end{equation*}
\eqref{eq:yGammaS2} and \eqref{eq:UpsilonS2} become
\begin{equation*}
S_2^2(k, \epsilon)D^2y^\Gamma(k, \epsilon)+S_1^2(k, \epsilon)D^1y^\Gamma(k, \epsilon)=0,
\end{equation*}
and
\begin{equation*}
S_2^2(k, 0)D^2\Upsilon(k)+S_1^2(k, 0)D^1\Upsilon(k)=0.
\end{equation*}
Thus
\begin{equation*}
\begin{split}
&S_2^2(k, 0)D^2\qnt{y^\Gamma(k, \varrho)-\Upsilon(k)}+S_1^2(k, 0)D^1\qnt{y^\Gamma(k, \varrho)-\Upsilon(k)}\\
&\quad=S_2^2(k, 0)D^2y^\Gamma(k, \varrho)+S_1^2(k, 0)D^1y^\Gamma(k, \varrho)-\qnt{S_2^2(k, \epsilon)D^2y^\Gamma(k, \epsilon)+S_1^2(k, \epsilon)D^1y^\Gamma(k, \epsilon)}.
\end{split}
\end{equation*}
As \eqref{eq:W2Estimate}, we have
\begin{equation}\label{eq:S2Estimate}
\abs{S_2^2(k, 0)D^2\qnt{y^\Gamma(k, \varrho)-\Upsilon(k)}}\leq C\epsilon^\beta\abs{\sharp-k}^{-\alpha-2}.
\end{equation}

From \eqref{eq:W2Estimate}, \eqref{eq:C2Estimate}, and \eqref{eq:S2Estimate}, we derive \eqref{eq:PerturbationEstimate2}. The proof is complete.
\end{proof}

\begin{proof}[Proof of main theorem]
Especially, we have
\begin{equation*}
\abs{\partial_\varrho F(\frac{y_u}{y_v}\qnt{k, \varrho; \epsilon}, k, \varrho)\Big|_{\nabla y=\nabla y^\Gamma; \nabla^2 y=\nabla^2y^\Gamma}-\partial_\varrho F(\frac{y_u}{y_v}\qnt{k, \varrho; \epsilon}, k, \varrho)\Big|_{\varrho=\epsilon=0; \nabla y=\nabla\Upsilon; \nabla^2 y=\nabla^2\Upsilon}}\leq C\epsilon^\beta.
\end{equation*}
Therefore
\begin{equation*}
\begin{split}
&\abs{\frac{\dif \Gamma}{\dif k}-F(\frac{y_u^\Gamma}{y_v^\Gamma}\qnt{k, \Gamma; \epsilon}, k, \Gamma)}\\
&\quad\quad=\bigg|\partial_\varrho F(\frac{y_u}{y_v}\qnt{k, \varrho; \epsilon}, k, \varrho)\Big|_{\varrho=\epsilon=0; \nabla y=\nabla\Upsilon; \nabla^2 y=\nabla^2\Upsilon}\qnt{\Gamma-\epsilon}- \frac{s_g' \cdot \nabla_U G}{G_\varrho \qnt{s_g' \cdot \nabla_U k}}\Big|_{\varrho=\epsilon}\\
&\quad\quad\quad-\qnt{\qnt{F(\frac{y_u^\Gamma}{y_v^\Gamma}\qnt{k, \Gamma; \epsilon}, k, \Gamma)-F(\frac{y_u^\Gamma}{y_v^\Gamma}\qnt{k, \epsilon; \epsilon}, k, \epsilon)}+F(\frac{y_u^\Gamma}{y_v^\Gamma}\qnt{k, \epsilon; \epsilon}, k, \epsilon)}\bigg|\\
&\quad\quad=\bigg|\partial_\varrho F(\frac{y_u}{y_v}\qnt{k, \varrho; \epsilon}, k, \varrho)\Big|_{\varrho=\epsilon=0; \nabla y=\nabla\Upsilon; \nabla^2 y=\nabla^2\Upsilon}\qnt{\Gamma-\epsilon}- \frac{s_g' \cdot \nabla_U G}{G_\varrho \qnt{s_g' \cdot \nabla_U k}}\Big|_{\varrho=\epsilon}\\
&\quad\quad\quad-\partial_\varrho F(\frac{y_u^\Gamma}{y_v^\Gamma}\qnt{k, \varrho; \epsilon}, k, \varrho)\Big|_{\Gamma<\varrho<\epsilon}\qnt{\Gamma-\epsilon}+\frac{s_g' \cdot \nabla_U G}{G_\varrho \qnt{s_g' \cdot \nabla_U k}}\Big|_{\varrho=\epsilon}\bigg|\\
&\quad\quad\leq\abs{\partial_\varrho F(\frac{y_u}{y_v}\qnt{k, \varrho; \epsilon}, k, \varrho)\Big|_{\varrho=\epsilon=0; \nabla y=\nabla\Upsilon; \nabla^2 y=\nabla^2\Upsilon}-\partial_\varrho F(\frac{y_u^\Gamma}{y_v^\Gamma}\qnt{k, \varrho; \epsilon}, k, \varrho)\Big|_{\Gamma<\varrho<\epsilon}}\qnt{\Gamma-\epsilon}\\
&\quad\quad\leq C\frac{\epsilon^\beta}{\sharp-k}\cdot\epsilon(\sharp-k)=C\epsilon^{\beta+1},
\end{split}
\end{equation*}
where we adopt Theorem \ref{thm:PerturbationEstimates} to obtain the last inequality. Come back to the $(u, v)$ coordinates, we get \eqref{eq:ApproximateBoundaryEstimate}. The proof is complete.
\end{proof}

\section*{Acknowledgments}
The author would like to thank Huang Genggeng for his selfless and patient discussions.
\bibliographystyle{plain}
\bibliography{hypersonicflowpastwedge20240611}

\end{document}